            \tikzset{
                subseteq/.style={
                draw=none,
                edge node={node [sloped, allow upside down, auto=false]{$\subseteq$}}},
                Subseteq/.style={
                draw=none,
                every to/.append style={
                edge node={node [sloped, allow upside down, auto=false]{$\subseteq$}}}
                }
            }
            \tikzset{
                equal/.style={
                draw=none,
                edge node={node [sloped, allow upside down, auto=false]{$=$}}},
                Equal/.style={
                draw=none,
                every to/.append style={
                edge node={node [sloped, allow upside down, auto=false]{$=$}}}
                }
            }
            \tikzset{
                isom/.style={
                draw=none,
                edge node={node [sloped, allow upside down, auto=false]{$\cong$}}},
                Isom/.style={
                draw=none,
                every to/.append style={
                edge node={node [sloped, allow upside down, auto=false]{$\cong$}}}
                }
            }
            \tikzset{
            rotated_label/.style={anchor=south, rotate=90, inner sep=.5mm}
            }
\tiny\color{gray},
    \definecolor{Francesco}{HTML}{0066ff}
\definecolor{Riccardo}{HTML}{7600b5}
    \theoremstyle{plain}
        \newtheorem{theorem}{Theorem}[section]
        \newtheorem{lemma}[theorem]{Lemma}
        \newtheorem{proposition}[theorem]{Proposition}
        \newtheorem{corollary}[theorem]{Corollary}
    \theoremstyle{definition}
    \theoremstyle{remark}
        \newtheorem{remark}[theorem]{Remark}
\DeclareMathOperator{\Gal}{Gal}
\DeclareMathOperator{\End}{End}
\DeclareMathOperator{\Aut}{Aut}
\newcommand\restr[2]{{ \left.\kern-\nulldelimiterspace #1 \vphantom{\big|} \right|_{#2} }}
\newcommand\altxrightarrow[2][0pt]{\mathrel{\ensurestackMath{\stackengine%
  {\dimexpr#1-5pt}{\xrightarrow{\phantom{#2}}}{\scriptstyle\!#2\,}%
  {O}{c}{F}{F}{S}}}}
\title[How big is the image of the Galois representations attached to CM elliptic curves?]{How big is the image of the Galois representations \\ attached to CM elliptic curves?}
    \author{Francesco Campagna}
    \author{Riccardo Pengo}
    \address{Francesco Campagna - Max Planck Institute for Mathematics, Vivatsgasse 7, 53111 Bonn, Germany}
    \email{\href{mailto:campagna@mpim-bonn.mpg.de}{campagna@mpim-bonn.mpg.de}}
    \address{Riccardo Pengo - École normale supérieure de Lyon, Unit\'e de Math\'ematiques Pures et Appliqu\'ees, 46 all\'ee d'Italie, 69007 Lyon, France}
    \email{\href{mailto:riccardo.pengo@ens-lyon.fr}{riccardo.pengo@ens-lyon.fr}}
    \date{}
    \subjclass[2020]{Primary: 11G05, 14K22, 11F80, 11G15;
    Secondary: 11Y40}
    \keywords{Elliptic curves, Complex multiplication, Galois representations}
\begin{document}
    \maketitle
    \begin{abstract}
        Using an analogue of Serre's open image theorem for elliptic curves with complex multiplication, one can associate to each CM elliptic curve $E$ defined over a number field $F$ a natural number $\mathcal{I}(E/F)$ which describes how big the image of the Galois representation associated to $E$ is.
        We show how one can compute $\mathcal{I}(E/F)$, using a closed formula that we obtain from the classical theory of complex multiplication.
    \end{abstract}

%\vspace{3\baselineskip}

\section{Introduction}
\label{sec:introduction}

Fix an algebraic closure $\overline{\mathbb{Q}}$ of the field of rational numbers $\mathbb{Q}$. Let $E$ be an elliptic curve defined over a number field $F \subseteq \overline{\mathbb{Q}}$, and let:
\begin{equation} \label{eq:general_galois_representation}
    \rho_E: G_F \rightarrow \Aut_\mathbb{Z}(E_{\text{tors}})
\end{equation}
be the representation of the absolute Galois group $G_F := \operatorname{Gal}(\overline{F}/F)$ associated to its action on the torsion points $E_\text{tors} := E(\overline{F})_\text{tors}$ of the elliptic curve $E$.

If $E$ does not have complex multiplication (CM), \textit{i.e.} $\operatorname{End}_{\overline{F}}(E) \cong \mathbb{Z}$, Serre's open image theorem \cite[Théorème~3]{Serre_1971} implies that the index $\mathcal{I}(E/F) := \lvert \operatorname{Aut}_\mathbb{Z}(E_\text{tors}) \colon \rho_E(G_F) \rvert$ is finite.
One is naturally led to investigate the dependence of $\mathcal{I}(E/F)$ on $E$ and $F$. 
For instance, one can ask whether there exists an explicit, closed formula for $\mathcal{I}(E/F)$, whose terms can be effectively computed starting from a Weierstra{\ss} equation of $E$.
At the time of writing, and to the best of the authors' knowledge, no such formula is available in the literature.
The previous question can then be weakened, by asking whether there exists an upper bound for $\mathcal{I}(E/F)$, which can be effectively computed in terms of $E$.
An affirmative answer to this second question has been provided by Lombardo in \cite{Lombardo_2015}.
%In fact, it has even been conjectured, as explained for $F = \mathbb{Q}$ in the introduction to \cite{Rouse_Sutherland_Zureick-Brown_2021}, that there should exist such an upper bound which does not depend on $E$, but only on the field of definition $F$.
In fact, it has even been conjectured that there should exist such an upper bound which does not depend on $E$, but only on the field of definition $F$.
This conjecture is explicitly mentioned for $F = \mathbb{Q}$ in the introduction to the recent work of Rouse, Sutherland and Zureick-Brown \cite{Rouse_Sutherland_Zureick-Brown_2021}, and is known to hold true under the assumption of Serre's uniformity conjecture, by previous work of Zywina (see \cite[Theorem~1.4]{Zywina_2015}).

On the other hand, if $E$ has complex multiplication by an order $\mathcal{O}$ in an imaginary quadratic field $K$, \textit{i.e.} $\operatorname{End}_{\overline{F}}(E) \cong \mathcal{O}$, the index of the image of $\rho_E$ inside $\operatorname{Aut}_\mathbb{Z}(E_\text{tors})$ is infinite. 
Nevertheless, as we recall in \cref{sec:CM_open_image}, one can formulate an analogue of Serre's open image theorem for $E$, by replacing $\operatorname{Aut}_\mathbb{Z}(E_\text{tors})$ with a smaller subgroup $\mathcal{G}(E/F) \subseteq \operatorname{Aut}_\mathbb{Z}(E_\text{tors})$, explicitly defined in \eqref{eq:G(E/F)}, which is closed and of infinite index inside $\operatorname{Aut}_\mathbb{Z}(E_\text{tors})$.
As a consequence, the index $\mathcal{I}(E/F) := \lvert \mathcal{G}(E/F) \colon \rho_E(G_F) \rvert$ is finite, and, as above, one can ask whether it can be expressed by means of an explicit and closed formula.
The main goal of this paper is to show how to use the classical theory of complex multiplication to give the following affirmative answer to this question.

% More precisely, the following theorem, proved in \cref{sec:proof}, provides an explicit formula for the index $\mathcal{I}(E/F)$.

\begin{theorem} \label{thm:main_theorem}
Let $\mathcal{O}$ be an order in an imaginary quadratic field $K \subseteq \overline{\mathbb{Q}}$. Let $E$ be an elliptic curve that has complex multiplication by $\mathcal{O}$ and is defined over a number field $F \subseteq \overline{\mathbb{Q}}$.
Denote by $K^{\text{ab}} \subseteq \overline{\mathbb{Q}}$ the maximal abelian extension of $K$ contained in $\overline{\mathbb{Q}}$, and by  $F K \subseteq \overline{\mathbb{Q}}$ and $F K^\text{ab} \subseteq \overline{\mathbb{Q}}$ the composita of $F$ with $K$ and $K^\text{ab}$ respectively.
Then:
\begin{equation} \label{eq:CM_index_formula}
    \mathcal{I}(E/F) = [(F K) \cap K^{\text{ab}} \colon H_\mathcal{O}] \cdot \frac{\lvert \mathcal{O}^{\times} \rvert}{[F(E_{\text{tors}}): F K^{\text{ab}}] }
\end{equation}
where $H_\mathcal{O} \subseteq K^\text{ab}$ is the \emph{ring class field} of $K$ relative to the order $\mathcal{O}$ (see \cite[\S~9]{Cox_2013}), and $F(E_\text{tors}) \subseteq \overline{\mathbb{Q}}$ is the field obtained by adjoining to $F$ all the coordinates of all the points lying in $E_\text{tors}$.
\end{theorem}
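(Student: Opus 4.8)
The plan is to reduce first to the case $K\subseteq F$, and then to combine the main theorem of complex multiplication with elementary group theory inside the unit group of the profinite ring $\widehat{\mathcal{O}}:=\mathcal{O}\otimes_{\Z}\widehat{\Z}$, which I identify with $\Aut_{\mathcal{O}}(E_{\text{tors}})$. This identification is legitimate because each $E[N]$ is a free $\mathcal{O}/N\mathcal{O}$-module of rank one: writing $E(\C)\cong\C/\mathfrak{a}$ for an invertible fractional $\mathcal{O}$-ideal $\mathfrak{a}$, one has $E[N]\cong\mathfrak{a}/N\mathfrak{a}$, and invertible modules over the finite ring $\mathcal{O}/N\mathcal{O}$ are free. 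Recall also that, by the construction of $\mathcal{G}(E/F)$ in \cref{sec:CM_open_image}, one has $\mathcal{G}(E/FK)=\widehat{\mathcal{O}}^{\times}$ while $\mathcal{G}(E/F)$ contains $\widehat{\mathcal{O}}^{\times}$ as a normal subgroup of index $[FK:F]\in\{1,2\}$. To carry out the reduction I would show that $\mathcal{I}(E/F)=\mathcal{I}(E_{FK}/FK)$ and that the right-hand side of \eqref{eq:CM_index_formula} is unaffected by replacing $F$ with $FK$; the latter is a direct inspection (using $FK\subseteq F(E_{\text{tors}})$ and $FK\subseteq FK^{\text{ab}}$), and for the former the only case is $K\not\subseteq F$, where $[FK:F]=2$. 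There an element $\sigma\in G_F$ inducing the non-trivial automorphism of $K$ acts on $E_{\text{tors}}$ semilinearly with respect to complex conjugation on $\mathcal{O}$, hence not $\mathcal{O}$-linearly (a map that were both would be killed by every $\alpha-\bar{\alpha}$, $\alpha\in\mathcal{O}$, which is impossible), so $\rho_E(G_F)\not\subseteq\widehat{\mathcal{O}}^{\times}$ and $\rho_E(G_F)\cap\widehat{\mathcal{O}}^{\times}=\rho_E(G_{FK})$; a short diagram chase with the normal subgroup $\widehat{\mathcal{O}}^{\times}\trianglelefteq\mathcal{G}(E/F)$ then gives $\mathcal{I}(E/F)=[\widehat{\mathcal{O}}^{\times}:\rho_E(G_{FK})]=\mathcal{I}(E_{FK}/FK)$.

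Now assume $K\subseteq F$, so that $H_{\mathcal{O}}=K(j(E))\subseteq F$, $\mathcal{G}(E/F)=\widehat{\mathcal{O}}^{\times}$, and $\rho_E(G_F)$ is an abelian subgroup of $\widehat{\mathcal{O}}^{\times}$. Writing $h$ for a Weber function of $E$ and regarding $\mathcal{O}^{\times}$ as a subgroup of $\widehat{\mathcal{O}}^{\times}$ via the diagonal embedding, I would record two consequences of the classical theory. First: $FK^{\text{ab}}=F(h(E_{\text{tors}}))$ — because $K^{\text{ab}}=K(j(E),h(E_{\text{tors}}))$ — and $\rho_E^{-1}(\mathcal{O}^{\times})=G_{FK^{\text{ab}}}$ (so in particular $FK^{\text{ab}}\subseteq F(E_{\text{tors}})$). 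The only non-formal point here is that a $\sigma\in G_F$ fixing every Weber value acts on each cyclic $\mathcal{O}/N\mathcal{O}$-module $E[N]$ by carrying a generator to an $\Aut(E)$-multiple of it, hence acts on $E_{\text{tors}}$ through a single root of unity lying in $\mathcal{O}^{\times}$; the converse inclusion is immediate, since Weber functions are $\Aut(E)$-invariant. Second: taking $F=H_{\mathcal{O}}$ above (where $H_{\mathcal{O}}K^{\text{ab}}=K^{\text{ab}}$), and using that the Weber values of $E$ already generate $K^{\text{ab}}$ over $H_{\mathcal{O}}$ with the full expected Galois group, $\rho_E$ induces an isomorphism $\Gal(K^{\text{ab}}/H_{\mathcal{O}})\xrightarrow{\sim}\widehat{\mathcal{O}}^{\times}/\mathcal{O}^{\times}$; equivalently $\widehat{\mathcal{O}}^{\times}=\mathcal{O}^{\times}\cdot\rho_E(G_{H_{\mathcal{O}}})$.

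Granting these, the formula follows from two index computations inside $\widehat{\mathcal{O}}^{\times}$. Since $\rho_E(G_{FK^{\text{ab}}})=\rho_E(G_F)\cap\mathcal{O}^{\times}$, the second isomorphism theorem in the abelian group $\widehat{\mathcal{O}}^{\times}$ gives $[\rho_E(G_F)\,\mathcal{O}^{\times}:\rho_E(G_F)]=[\mathcal{O}^{\times}:\rho_E(G_{FK^{\text{ab}}})]=|\mathcal{O}^{\times}|/[F(E_{\text{tors}}):FK^{\text{ab}}]$. Passing to $\widehat{\mathcal{O}}^{\times}/\mathcal{O}^{\times}$, identified with $\Gal(K^{\text{ab}}/H_{\mathcal{O}})$ through the isomorphism just described, the image of $\rho_E(G_F)$ corresponds to the restriction image of $G_F$, namely $\Gal(K^{\text{ab}}/(F\cap K^{\text{ab}}))$, so $[\widehat{\mathcal{O}}^{\times}:\rho_E(G_F)\,\mathcal{O}^{\times}]=[F\cap K^{\text{ab}}:H_{\mathcal{O}}]$. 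Multiplying the two indices, and using $F\cap K^{\text{ab}}=(FK)\cap K^{\text{ab}}$, yields $\mathcal{I}(E/F)=[\widehat{\mathcal{O}}^{\times}:\rho_E(G_F)]=[(FK)\cap K^{\text{ab}}:H_{\mathcal{O}}]\cdot|\mathcal{O}^{\times}|/[F(E_{\text{tors}}):FK^{\text{ab}}]$, which is \eqref{eq:CM_index_formula}.

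The main obstacle I expect is establishing the two complex-multiplication inputs with full rigour for a possibly non-maximal order $\mathcal{O}$. This forces one to work with the ring class field $H_{\mathcal{O}}$ rather than with the Hilbert class field of $K$, to control the behaviour at the primes dividing the conductor of $\mathcal{O}$ — where $\mathcal{O}$ is not a Dedekind domain even though $E_{\text{tors}}$ stays locally free of rank one — and to turn the idelic main theorem of complex multiplication into the concrete assertion that $F(E_{\text{tors}})$ is obtained from $FK^{\text{ab}}$ by adjoining a ``root of unity'' layer of degree dividing $|\mathcal{O}^{\times}|$ that is dictated by $\Aut(E)$.
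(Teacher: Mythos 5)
Your skeleton is essentially the paper's argument written multiplicatively instead of via the snake lemma: you reduce to $K\subseteq F$ exactly as in \cref{lem: base change index}, your input (a) is the identification $\rho_E^{-1}(\mathcal{O}^\times)=G_{FK^\text{ab}}$ underlying the top row of \eqref{eq:snake_diagram}, the exact sequence $1\to\mathcal{O}^\times\to\widehat{\mathcal{O}}^\times\to\operatorname{Gal}(K^\text{ab}/H_\mathcal{O})\to 1$ is \eqref{eq:exact_sequence_psi}, and your product of the two indices $[\widehat{\mathcal{O}}^\times:\rho_E(G_F)\mathcal{O}^\times]\cdot[\rho_E(G_F)\mathcal{O}^\times:\rho_E(G_F)]$ is precisely $\lvert\operatorname{coker}(\iota)\rvert\cdot\lvert\operatorname{coker}(\iota')\rvert$. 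The only substantive difference is how the classical theory enters: the paper proves the needed compatibility (commutativity of \eqref{eq:surjections_square}) from the idelic main theorem of complex multiplication over $F$ together with functoriality of the Artin map, whereas you import it through Weber-function statements taken as black boxes.

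That compatibility is exactly where your write-up has a genuine soft spot. Your input (b) is phrased as ``taking $F=H_\mathcal{O}$'' and speaks of $\rho_E(G_{H_\mathcal{O}})$, but $E$ is defined over $F$ and in general not over $H_\mathcal{O}$: an element of $G_{H_\mathcal{O}}\setminus G_F$ sends $E_\text{tors}$ to $E^\sigma_\text{tors}$, so this group is not defined as written. More importantly, your decisive step ``the image of $\rho_E(G_F)$ corresponds to the restriction image of $G_F$, namely $\operatorname{Gal}(K^\text{ab}/(F\cap K^\text{ab}))$'' does not follow from the mere existence of an isomorphism $\widehat{\mathcal{O}}^\times/\mathcal{O}^\times\cong\operatorname{Gal}(K^\text{ab}/H_\mathcal{O})$ together with the equality of kernels provided by (a): two homomorphisms out of $G_F$ with the same kernel can have different images, so you must actually know that the composite $G_F\xrightarrow{\rho_E}\widehat{\mathcal{O}}^\times\to\widehat{\mathcal{O}}^\times/\mathcal{O}^\times$ coincides, under your identification, with restriction to $K^\text{ab}$. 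This is the non-formal heart of the theorem, valid for non-maximal orders and for the specific model $E_{/F}$, and it is what the paper establishes via diagrams \eqref{eq:main_theorem_CM_square} and \eqref{eq:functoriality_of_CFT}. It can be repaired inside your framework — for instance, fix a model $E_0$ over $H_\mathcal{O}$ with $j(E_0)=j(E)$, note that $\rho_E=\rho_{E_0}\cdot\chi$ on $G_F$ for a character $\chi$ with values in $\mathcal{O}^\times$ (so the two agree modulo $\mathcal{O}^\times$), and invoke the main theorem of complex multiplication for $E_0$ in the precise form that, for $\sigma\in G_{H_\mathcal{O}}$, the class of $\rho_{E_0}(\sigma)$ modulo $\mathcal{O}^\times$ depends only on $\sigma\vert_{K^\text{ab}}$ and induces an isomorphism $\operatorname{Gal}(K^\text{ab}/H_\mathcal{O})\cong\widehat{\mathcal{O}}^\times/\mathcal{O}^\times$ — but as it stands you assert this compatibility rather than prove or cite it in a form that applies, and all of the class field theory of the proof is concentrated in exactly that step.
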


Note that the right-hand side of \eqref{eq:CM_index_formula} makes sense because the extension $K \subseteq H_\mathcal{O}$ is abelian, and, whenever $\operatorname{End}_{\overline{F}}(E) \cong \mathcal{O}$, one knows that $F K^\text{ab} \subseteq F(E_\text{tors})$ \cite[\S~4.1 and Remark~3.8]{Campagna_Pengo_2020}, and $H_\mathcal{O} = K(j(E)) \subseteq F K$ \cite[Theorem~11.1]{Cox_2013}, where $j(E) \in F$ denotes the \textit{$j$-invariant} of the elliptic curve $E$. 
Moreover, the classical theory of complex multiplication implies that the degree of the extension $F K^\text{ab} \subseteq F(E_\text{tors})$ is finite and divides $\lvert \mathcal{O}^\times \rvert$.
We explain this in more detail in \cref{sec:proof}, which is mainly devoted to the proof of \cref{thm:main_theorem}.

As an immediate consequence of \cref{thm:main_theorem}, one has the divisibility:
\begin{equation} \label{eq:upper_bound}
    \left. \mathcal{I}(E/F) \, \middle| \, [ (F K) \cap K^\text{ab} \colon H_\mathcal{O} ] \cdot \lvert \mathcal{O}^\times \rvert \right.
\end{equation}
which shows that $\mathcal{I}(E/F)$ can be bounded solely in terms of $F$, for every CM elliptic curve $E_{/F}$. 
This improves the upper bounds for $\mathcal{I}(E/F)$ previously proved by Lombardo \cite[Theorem~6.6]{Lombardo_2017} and Bourdon and Clark \cite[Corollary~1.5]{Bourdon_Clark_2020}.
Moreover, \cref{thm:main_theorem} applied to any elliptic curve $E_{/\mathbb{Q}}$ which has complex multiplication by an imaginary quadratic order $\mathcal{O}$ shows that $\mathcal{I}(E/\mathbb{Q}) = \lvert \mathcal{O}^\times \rvert$. In the case $\mathcal{O} = \mathbb{Z}[i]$, this strengthens the conclusion of \cite[Theorem~1.3]{Lozano-Robledo_2019}. 

The foregoing discussion shows that $\mathcal{I}(E/F)$ is very well understood in the CM case. However, it may not appear immediately clear how to apply \eqref{eq:CM_index_formula} to compute $\mathcal{I}(E/F)$ in concrete examples.
We explain how to do so in \cref{sec:algorithm}. 
In fact, after rewriting \eqref{eq:CM_index_formula} appropriately (see \cref{prop:finite_computation}), we obtain an algorithm that takes as inputs a number field $F$ and a CM elliptic curve $E_{/F}$, and outputs $\mathcal{I}(E/F)$.
More precisely, we rephrase \cref{eq:CM_index_formula} in terms of a finite extension $L \supseteq F K$ such that $F(E_\text{tors}) = L K^\text{ab}$. 
We prove in \cref{prop:3_division_field} that one can always take $L = F(E[I])$ to be the \textit{$I$-division field} generated by the coordinates of the points $P \in E[I]$ belonging to the \textit{$I$-torsion subgroup}:  
\[
    E[I] := \bigcap_{\alpha \in I} \ker\left(E(\overline{F}) \xrightarrow{ [\alpha]_E } E(\overline{F})\right)
\]
where $I \subseteq \mathcal{O}$ is any ideal such that $\lvert \mathbb{Z}/(I \cap \mathbb{Z}) \rvert > \max(2,\lvert \mathcal{O}^\times \rvert/2)$, and $[\cdot]_E \colon \mathcal{O} \altxrightarrow{\sim} \operatorname{End}_{\overline{F}}(E)$ is the normalised isomorphism described in \cref{lem:normalized_isomorphism}.
In practice, if $j(E) \neq 0$ one usually takes $L = F(E[3])$ in order to ease the computational burden.
We devote \cref{sec:examples} to the application of this algorithm to some explicit examples of elliptic curves $E$ that have complex multiplication by imaginary quadratic orders $\mathcal{O}$ of class number two.

\section{Analogues of Serre's open image theorem for CM elliptic curves} \label{sec:CM_open_image}

Let $E$ be an elliptic curve defined over a number field $F \subseteq \overline{\mathbb{Q}}$. 
Then, the absolute Galois group $G_F$ naturally acts both on the set $E_\text{tors} = \varinjlim_N E[N]$, and on the adelic Tate module $\mathcal{T}(E) := \varprojlim_N E[N]$. 
The first action gives rise to the Galois representation $\rho_E$ appearing in \eqref{eq:general_galois_representation}, whereas the action on $\mathcal{T}(E)$ induces another Galois representation $\varrho_E \colon G_F \to \operatorname{Aut}_{\widehat{\mathbb{Z}}}(\mathcal{T}(E))$. 
As done in \cite[\S~4.1, Remarque~(1)]{Serre_1971}, one can construct an isomorphism: \[\nu \colon \operatorname{Aut}_{\widehat{\mathbb{Z}}}(\mathcal{T}(E)) \altxrightarrow{\sim} \operatorname{Aut}_{\widehat{\mathbb{Z}}}(E_\text{tors}) = \operatorname{Aut}_\mathbb{Z}(E_\text{tors})\] such that $\rho_E = \nu \circ \varrho_E$.
As a consequence, one can indifferently study the Galois representation $\rho_E$, as done in this paper, or its twin $\varrho_E$, as done in some of our references. 

If $E$ does not have complex multiplication, \textit{i.e.} if $\operatorname{End}_F(E) \cong \operatorname{End}_{\overline{F}}(E) \cong \mathbb{Z}$, then the celebrated ``open image theorem'', proved by Serre in \cite[Théorème~3]{Serre_1971}, shows that the image of the Galois representation $\rho_E$ is a subgroup of finite index inside $ \operatorname{Aut}_\mathbb{Z}(E_\text{tors}) \cong \operatorname{GL}_2(\widehat{\mathbb{Z}})$, where $\widehat{\mathbb{Z}} := \varprojlim_N (\mathbb{Z}/N \mathbb{Z})$ denotes the profinite completion of $\mathbb{Z}$.
On the other hand, if the elliptic curve $E$ has complex multiplication, the image of $\rho_E$ is not open inside $\operatorname{Aut}_\mathbb{Z}(E_\text{tors})$. 
However, one can formulate a CM analogue of Serre's open image theorem by replacing $\operatorname{Aut}_\mathbb{Z}(E_\text{tors})$ with an appropriate closed subgroup $\mathcal{G}(E/F) \subseteq \Aut_\mathbb{Z}(E_\text{tors})$, which we now describe.

Suppose now that $\operatorname{End}_{\overline{F}}(E) \not\cong \mathbb{Z}$.
Then the endomorphism ring $\operatorname{End}_{\overline{F}}(E)$ can be canonically identified with an order inside an imaginary quadratic field, as the following classical lemma shows.

\begin{lemma} \label{lem:normalized_isomorphism}
    Let $F$ be a number field, and $E_{/F}$ be an elliptic curve such that $\operatorname{End}_{\overline{F}}(E) \not\cong \mathbb{Z}$, where $\overline{F}$ denotes a fixed algebraic closure of $F$.
    Then, there exists an imaginary quadratic field $K$ and an order $\mathcal{O} \subseteq K$ such that $\operatorname{End}_{\overline{F}}(E) \cong \mathcal{O}$.
    Moreover, for each embedding $\iota \colon K \hookrightarrow \overline{F}$, there exists a unique isomorphism:
    \[            [\cdot]_{E,\iota} \colon \mathcal{O} \altxrightarrow{\sim} \operatorname{End}_{\overline{F}}(E)
    \]
    such that $[\alpha]_{E,\iota}^\ast(\omega) = \iota(\alpha) \cdot \omega$ for every $\alpha \in \mathcal{O}$ and every invariant differential $\omega$ defined over $E_{\overline{F}}$, where $[\alpha]_{E,\iota}^\ast(\omega)$ denotes the pull-back of $\omega$ along the endomorphism $[\alpha]_{E,\iota}$.
\end{lemma}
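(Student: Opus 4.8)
The statement is classical; the plan is to reduce it to two ingredients: the complex-analytic uniformisation of $E$, which gives the first assertion together with the classification of $\End_{\overline{F}}(E)$, and the faithfulness of the action of endomorphisms on invariant differentials in characteristic zero, which gives the normalisation.

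First I would fix an embedding $\overline{\Q}\hookrightarrow\C$ and pass to the base change $E_\C$, whose analytification is a complex torus $\C/\Lambda$ for some lattice $\Lambda\subset\C$. Since every endomorphism of $\C/\Lambda$ fixing the origin is multiplication by a complex number, one has $\End_\C(E_\C)\cong\{z\in\C:z\Lambda\subseteq\Lambda\}$, and under this identification the invariant differential of $\C/\Lambda$ is $\dx z$, on which the endomorphism attached to $z$ acts by $\dx z\mapsto z\,\dx z$. A short computation with a $\Z$-basis of $\Lambda$ shows that any element of $\{z\in\C:z\Lambda\subseteq\Lambda\}$ not lying in $\Z$ is an algebraic integer generating an imaginary quadratic field, and that this ring is finitely generated over $\Z$; hence, as $\End_{\overline{F}}(E)\not\cong\Z$, it is an order, which we call $\mathcal{O}$, in an imaginary quadratic field $K$. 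Because $E$ is defined over $\overline{\Q}$ and, in characteristic zero, the endomorphism ring of an abelian variety over an algebraically closed field is insensitive to extension of the base field, we obtain $\End_{\overline{F}}(E)=\End_{\overline{\Q}}(E)\cong\End_\C(E_\C)\cong\mathcal{O}$, which is the first assertion. (Alternatively, one may invoke directly the classical classification of $\End(E)$.)

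For the normalisation, recall that for an elliptic curve $E$ over any field $k$ the pull-back action on the one-dimensional space of invariant differentials defines a ring homomorphism $c_E\colon\End_k(E)\to k$ characterised by $\phi^*\omega=c_E(\phi)\,\omega$: multiplicativity is functoriality of pull-back, and additivity is the translation-invariance of the invariant differential. When $k$ has characteristic zero, $c_E$ is injective, since a nonzero endomorphism is a separable isogeny and hence acts nontrivially on differentials — this is the one place where the characteristic-zero hypothesis is genuinely used, and it is the crux of the argument. Extending $c_E$ to $\End_{\overline{F}}(E)\otimes_\Z\Q\cong K$ yields an embedding whose image is the unique subfield of $\overline{\Q}$ isomorphic to $K$; this image equals $\iota(K)$ for each of the two embeddings $\iota\colon K\hookrightarrow\overline{F}$ — which differ by the complex conjugation of $K$ — and, since orders in $K$ are stable under that conjugation, $\iota(\mathcal{O})=c_E\bigl(\End_{\overline{F}}(E)\bigr)$ for either choice of $\iota$. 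Therefore $[\cdot]_{E,\iota}:=c_E^{-1}\circ(\iota|_{\mathcal{O}})$ is a well-defined ring isomorphism $\mathcal{O}\xrightarrow{\sim}\End_{\overline{F}}(E)$ with $[\alpha]_{E,\iota}^*\omega=c_E([\alpha]_{E,\iota})\,\omega=\iota(\alpha)\,\omega$, giving existence; and if $\psi$ is any isomorphism satisfying the same condition, then $c_E\circ\psi=\iota|_{\mathcal{O}}=c_E\circ[\cdot]_{E,\iota}$, so the injectivity of $c_E$ forces $\psi=[\cdot]_{E,\iota}$. The main obstacle is essentially bookkeeping — keeping the two embeddings of $K$ straight, and checking that the order carved out of $\overline{F}$ by $c_E$ does not depend on which one is chosen — while the genuinely substantive point is the injectivity of $c_E$.
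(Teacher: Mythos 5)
Your proof is correct, and its core mechanism --- the faithful action of endomorphisms on the one-dimensional space of invariant differentials --- is the same one that underlies the paper's argument, but you organize it genuinely differently. The paper outsources existence to the literature (Silverman, \emph{AEC} III.9.4 for the existence of $K$ and $\mathcal{O}$, and \emph{ATAEC} II.1.1, after fixing an embedding $\overline{F} \hookrightarrow \mathbb{C}$, for the existence of the normalised isomorphism) and then proves uniqueness by a short argument: two normalised isomorphisms differ by an automorphism $\sigma$ of $\mathcal{O}$, the normalisation gives $\iota(\alpha - \sigma(\alpha)) \cdot \omega = 0$ for all $\alpha$, hence $\sigma = \operatorname{Id}_{\mathcal{O}}$. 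You instead make the differential character $c_E \colon \operatorname{End}_{\overline{F}}(E) \to \overline{F}$ the central object: its additivity, its injectivity in characteristic zero (no inseparable isogenies), and the observation that its image equals $\iota(\mathcal{O})$ for either embedding $\iota$ (because $K/\mathbb{Q}$ is Galois and orders of $K$ are stable under conjugation) give existence by setting $[\cdot]_{E,\iota} := c_E^{-1} \circ \iota|_{\mathcal{O}}$, with uniqueness falling out of the injectivity of $c_E$ at no extra cost. Your route is more self-contained --- the analytic theory is only used for the classification of the endomorphism ring, not for constructing the normalised isomorphism --- at the price of having to verify the standard facts about $c_E$ and the independence of its image from the choice of $\iota$, which the paper's citations subsume; both uniqueness arguments are morally the same computation on a one-dimensional space of differentials.
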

\begin{proof}
    See \cite[Chapter~III, Corollary~9.4]{si09} for the existence of $K$ and $\mathcal{O}$. Moreover, the existence of $[\cdot]_{E,\iota}$ follows from \cite[Chapter~II, Proposition~1.1]{si94}, after fixing an embedding $\overline{F} \hookrightarrow \mathbb{C}$.
    Finally, observe that for any two isomorphisms $[\cdot],[\cdot]' \colon \mathcal{O} \altxrightarrow{\sim} \operatorname{End}_{\overline{F}}(E)$, there exists an automorphism $\sigma \colon \mathcal{O} \altxrightarrow{\sim} \mathcal{O}$ such that $[\alpha] = [\sigma(\alpha)]'$ for every $\alpha \in \mathcal{O}$.
    Hence, if these isomorphisms satisfy the requirements of the lemma, we see that $\iota(\alpha - \sigma(\alpha)) \cdot \omega = 0$ for every $\alpha \in \mathcal{O}$ and every invariant differential $\omega$. Thus, we have that $\sigma = \operatorname{Id}_\mathcal{O}$, which allows us to conclude.
\end{proof}

Now, suppose at first that $E_{/F}$ is an elliptic curve such that  $\operatorname{End}_F(E) \cong \operatorname{End}_{\overline{F}}(E) \cong \mathcal{O}$ for some order $\mathcal{O}$ inside an imaginary quadratic field $K$. Then by \cite[Chapter~II, Proposition~30]{Shimura_1998} we necessarily have $K \subseteq F$ and one can easily show (using for instance \cite[Chapter~II, Theorem 2.2]{si94}) that the absolute Galois group $G_F$ of $F$ acts as $\mathcal{O}$-module automorphisms on $E_\text{tors}$. 
Thus, we see that: 
\begin{equation} \label{eq:G_Aut_O}
    \rho_E(G_F) \subseteq \Aut_\mathcal{O}(E_\text{tors}) =: \mathcal{G}(E/F)
\end{equation} 
where $\mathcal{G}(E/F)$ is an abelian group canonically isomorphic to $\widehat{\mathcal{O}}^\times$, the unit group of the profinite completion $\widehat{\mathcal{O}}:=\varprojlim_N (\mathcal{O}/N \mathcal{O})$. 
In particular, the extension $F \subseteq F(E_\text{tors})$ is abelian.
Note also that $\Aut_\mathcal{O}(E_\text{tors})$ is closed inside $\Aut_{\mathbb{Z}}(E_\text{tors})$, since we have:
\[
\Aut_\mathcal{O}(E_\text{tors})=\bigcap_{N \in \mathbb{N}} \text{res}_N^{-1}(\Aut_\mathcal{O}(E[N]))
\]
where $\text{res}_N:\Aut_{\mathbb{Z}}(E_\text{tors}) \to \Aut_{\mathbb{Z}}(E[N])$ denotes the natural restriction map. 
On the other hand, $\operatorname{Aut}_\mathcal{O}(E_\text{tors})$ is not open inside $\operatorname{Aut}_\mathbb{Z}(E_\text{tors}) \cong \operatorname{GL}_2(\widehat{\mathbb{Z}})$, because the latter does not contain any abelian subgroup of finite index.
However, the subgroup $\rho_E(G_F)$ is open in $\Aut_\mathcal{O}(E_\text{tors})$, as shown in \cite[\S~4.5]{Serre_1971} using the classical theorems of complex multiplication. 
Since $\Aut_\mathcal{O}(E_\text{tors}) \cong \widehat{\mathcal{O}}^\times$ is a profinite group, this in particular implies that the index of $\rho_E(G_F)$ inside $\Aut_\mathcal{O}(E_\text{tors})$ is finite. 
We can regard this result as an analogue of Serre's open image theorem for those CM elliptic curves whose field of definition contains the field $K$.

Assume now that the elliptic curve $E_{/F}$ satisfies $\operatorname{End}_F(E) \cong \mathbb{Z}$ and $\operatorname{End}_{\overline{F}}(E) \cong \mathcal{O}$, for some order $\mathcal{O}$ inside an imaginary quadratic field $K$. Again by \cite[Chapter~II, Proposition~30]{Shimura_1998}, under these assumptions we must have $K \not \subseteq F$. 
Since not all the geometric endomorphisms of $E$ are defined over the base field, in this case the Galois group $G_F$ does not respect the $\mathcal{O}$-module structure on $E_\text{tors}$. 
More precisely, since we fixed an embedding $\mathcal{O} \subseteq K \subseteq \overline{\mathbb{Q}} = \overline{F}$, there exists a unique isomorphism $[\cdot]_E: \mathcal{O} \altxrightarrow{\sim} \End_{\overline{F}}(E)$ such that for every $\alpha \in \mathcal{O}$ and every invariant differential $\omega$ on the elliptic curve $E_{\overline{F}}$, the equality $[\alpha]_E^\ast(\omega) = \alpha \omega$ holds.  then an automorphism $\sigma \in G_F$ acts on $[\alpha]_E (P)$ as:
\begin{equation} \label{eq:Galois_action_constants}
    \sigma \left([\alpha]_E (P) \right) = [\sigma (\alpha)]_E (\sigma(P))
\end{equation}
as follows from \cite[Chapter~II, Theorem 2.2]{si94}. 
We then see that for every $\sigma \in G_F$ and each fixed $\tau \in G_F$ restricting to the unique non-trivial element in $\Gal(FK/F)$, exactly one among $\sigma$ and $\sigma \tau$ acts $\mathcal{O}$-linearly on $E_\text{tors}$. 
We deduce that:
\begin{equation} \label{eq:fake_normalizer}
    \rho_E\left( G_F \right) \subseteq \left \langle \Aut_\mathcal{O}(E_\text{tors}), \rho_E(\tau) \right \rangle := \mathcal{G}(E/F)
\end{equation}
and one can easily show that the group $\mathcal{G}(E/F)$ does not actually depend on $\tau$, thus justifying the notation. 
Indeed, if both $\tau, \tau' \in G_F$ restrict to the unique non-trivial element of $\operatorname{Gal}(F K/F)$, one has that $\tau \tau' \in \operatorname{Gal}(\overline{F}/F K)$, hence $\rho_E(\tau \tau') \in \operatorname{Aut}_\mathcal{O}(E_\text{tors})$, which shows that $\left \langle \Aut_\mathcal{O}(E_\text{tors}), \rho_E(\tau) \right \rangle = \left \langle \Aut_\mathcal{O}(E_\text{tors}), \rho_E(\tau') \right \rangle$ as wanted.
Moreover, $\rho_E(\tau)$ normalises  $\operatorname{Aut}_\mathcal{O}(E_\text{tors})$, as follows from \eqref{eq:Galois_action_constants} and the fact that $\rho_E(\tau)^2 \in \Aut_\mathcal{O}(E_\text{tors})$. 
Hence, we see that $\Aut_\mathcal{O}(E_\text{tors})$ is a normal subgroup of $\mathcal{G}(E/F)$ with index $|\mathcal{G}(E/F):\Aut_\mathcal{O}(E_\text{tors})|=2$. 
As a consequence, $\mathcal{G}(E/F)$ is closed inside $\Aut_\mathbb{Z}(E_\text{tors})$, and so it is a profinite group. 
On the other hand, $\mathcal{G}(E/F)$ is not open inside $\Aut_\mathbb{Z}(E_\text{tors})$, because it contains the abelian group $\operatorname{Aut}_\mathcal{O}(E_\text{tors})$ as a finite-index subgroup.
Thus, $\rho_E(G_F)$ cannot be open inside $\operatorname{Aut}_\mathbb{Z}(E_\text{tors})$. Nevertheless, $\rho_E(G_F)$ is open inside the closed subgroup $\mathcal{G}(E/F)$, as the following lemma shows.

\begin{lemma} \label{lem: base change index}
    Let $E_{/F}$ be an elliptic curve with complex multiplication by an order $\mathcal{O}$ in an imaginary quadratic field $K \not \subseteq F$, and let $E':=E_{FK}$ denote the base-change of $E$ to the compositum $FK$. Then $\rho_E(G_F)$ is open in $\mathcal{G}(E/F)$, and the following equality:
    \[
    \mathcal{I}(E/F) := \lvert \mathcal{G}(E/F) : \rho_E(G_F)  \rvert =  \lvert \Aut_\mathcal{O}(E'_\text{tors}) : \rho_{E'}(G_{FK})  \rvert =: \mathcal{I}(E/F K)
    \]
    holds.
\end{lemma}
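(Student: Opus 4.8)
The plan is to reduce the statement to the situation of an elliptic curve whose field of definition already contains $K$, treated in the first part of \cref{sec:CM_open_image}, by passing from $F$ to the quadratic extension $FK$ (note that $[FK:F]=2$, since $K$ is imaginary quadratic and $K\not\subseteq F$). Throughout I write $\tau\in G_F$ for a fixed lift of the nontrivial element of $\operatorname{Gal}(FK/F)$, as in \eqref{eq:fake_normalizer}.

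First I would record the identifications induced by base change. The $\overline{F}$-points, the torsion subgroup and its $\mathcal{O}$-module structure are unchanged when $E$ is replaced by $E'=E_{FK}$, so $\operatorname{Aut}_\mathcal{O}(E'_\text{tors})=\operatorname{Aut}_\mathcal{O}(E_\text{tors})$, and $\rho_{E'}\colon G_{FK}\to\operatorname{Aut}_\mathbb{Z}(E'_\text{tors})$ is literally the restriction of $\rho_E$ to $G_{FK}\subseteq G_F$; in particular $\rho_{E'}(G_{FK})=\rho_E(G_{FK})$, so the two indices in the displayed equation of the lemma compare the same pair of groups. Moreover, \eqref{eq:Galois_action_constants} shows that every element of $\operatorname{End}_{\overline{F}}(E)\cong\mathcal{O}$ is fixed by $G_{FK}$, hence defined over $FK$, so $\operatorname{End}_{FK}(E')\cong\operatorname{End}_{\overline{FK}}(E')\cong\mathcal{O}$. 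This places $E'/FK$ in the first case discussed in this section, and the open image statement recalled from \cite[\S~4.5]{Serre_1971} gives that $\rho_E(G_{FK})=\rho_{E'}(G_{FK})$ is open, hence of finite index $\mathcal{I}(E/FK)$, in the profinite group $\operatorname{Aut}_\mathcal{O}(E_\text{tors})$.

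The crux is the remark, already implicit before \eqref{eq:fake_normalizer}, that an element $\sigma\in G_F$ acts $\mathcal{O}$-linearly on $E_\text{tors}$ if and only if $\sigma\in G_{FK}$: by \eqref{eq:Galois_action_constants}, $\sigma$ is $\mathcal{O}$-linear exactly when $[\sigma(\alpha)]_E=[\alpha]_E$ for all $\alpha\in\mathcal{O}$, i.e. exactly when $\sigma$ fixes $\mathcal{O}$ pointwise, i.e. when $\sigma$ restricts to the identity on $K$. It follows that $\rho_E(G_F)\cap\operatorname{Aut}_\mathcal{O}(E_\text{tors})=\rho_E(G_{FK})$. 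Since $\rho_E(\tau)\notin\operatorname{Aut}_\mathcal{O}(E_\text{tors})$ (as $\tau$ does not fix $K$) while $G_{FK}$ has index $2$ in $G_F$, this forces $[\rho_E(G_F):\rho_E(G_{FK})]=2$.

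To conclude I would compute $[\mathcal{G}(E/F):\rho_E(G_{FK})]$ along the two chains
\[
\mathcal{G}(E/F)\supseteq\operatorname{Aut}_\mathcal{O}(E_\text{tors})\supseteq\rho_E(G_{FK})\qquad\text{and}\qquad\mathcal{G}(E/F)\supseteq\rho_E(G_F)\supseteq\rho_E(G_{FK}).
\]
Along the first chain, $[\mathcal{G}(E/F):\operatorname{Aut}_\mathcal{O}(E_\text{tors})]=2$ (established just before the lemma) and $[\operatorname{Aut}_\mathcal{O}(E_\text{tors}):\rho_E(G_{FK})]=\mathcal{I}(E/FK)$, so $[\mathcal{G}(E/F):\rho_E(G_{FK})]=2\,\mathcal{I}(E/FK)$; along the second chain, $[\rho_E(G_F):\rho_E(G_{FK})]=2$, so $[\mathcal{G}(E/F):\rho_E(G_{FK})]=2\,[\mathcal{G}(E/F):\rho_E(G_F)]$. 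Equating the two yields $\lvert\mathcal{G}(E/F):\rho_E(G_F)\rvert=\mathcal{I}(E/FK)$, which is in particular finite; since $\rho_E(G_F)$ is the continuous image of a compact group it is closed in $\mathcal{G}(E/F)$, and a closed subgroup of finite index in a profinite group is open, so $\rho_E(G_F)$ is open. I do not anticipate a genuine obstacle here: the argument is essentially bookkeeping of subgroup indices, and the only points requiring care — the base-change identifications and the fact that $\rho_E(\tau)\notin\operatorname{Aut}_\mathcal{O}(E_\text{tors})$ — follow immediately from \eqref{eq:Galois_action_constants}.
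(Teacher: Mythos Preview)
Your proof is correct and follows essentially the same route as the paper: identify $\rho_{E'}(G_{FK})=\rho_E(G_{FK})$ and $\operatorname{Aut}_\mathcal{O}(E'_\text{tors})=\operatorname{Aut}_\mathcal{O}(E_\text{tors})$, invoke Serre's open image result over $FK$, establish $[\rho_E(G_F):\rho_E(G_{FK})]=2$, and compare indices along the two chains through $\rho_E(G_{FK})$. The only cosmetic difference is that the paper obtains $[\rho_E(G_F):\rho_E(G_{FK})]=2$ by citing the inclusion $FK\subseteq F(E_\text{tors})$ from \cite{Bourdon_Clark_Stankewicz_2017} (so that $\rho_E$ separates $G_F/G_{FK}$), whereas you deduce it directly from \eqref{eq:Galois_action_constants} via $\rho_E(\tau)\notin\operatorname{Aut}_\mathcal{O}(E_\text{tors})$; your argument is self-contained and equally valid.
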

\begin{proof}
Since $\Aut_\mathcal{O}(E_\text{tors})$ is closed and of finite index in $\mathcal{G}(E/F)$, it is also open in the same group. 
Moreover, by \cite[\S~4.5, Corollaire]{Serre_1971} the inclusion $\rho_{E'}(G_{FK}) \subseteq \Aut_\mathcal{O}(E'_\text{tors})$ is open, and clearly $\rho_{E'}(G_{FK})=\rho_{E}(G_{FK})$ and $\Aut_\mathcal{O}(E'_\text{tors})=\Aut_\mathcal{O}(E_\text{tors})$. 
Thus we see that $\rho_E(G_{FK})$ is an open subgroup of $\rho_E(G_{F})$ and we conclude that the latter is open in $\mathcal{G}(E/F)$.
In particular, $\rho_E(G_F)$ is a closed subgroup of finite index inside $\mathcal{G}(E/F)$.

To prove the equality of indices, we use the fact that $FK \subseteq F(E_\text{tors})$, by \cite[Lemma 3.15]{Bourdon_Clark_Stankewicz_2017}. Since $\rho_E$ induces an injective Galois representation $\Gal(F(E_\text{tors})/F) \hookrightarrow \mathcal{G}(E/F)$, we have $\left| \rho_E(G_F) : \rho_E(G_{FK}) \right|=2$. 
Now, the computation:
\[
\left| \mathcal{G}(E/F): \rho_E(G_F) \right|=\frac{1}{2} \left| \mathcal{G}(E/F): \rho_E(G_{FK}) \right|=\left| \Aut_\mathcal{O}(E_\text{tors}): \rho_E(G_{FK}) \right|
\]
allows us to conclude.
\end{proof}

We summarise our discussion so far. Given a number field $F$ and an elliptic curve $E_{/F}$ with complex multiplication by an order $\mathcal{O}$ in an imaginary quadratic field $K$, we define, following \eqref{eq:G_Aut_O} and \eqref{eq:fake_normalizer}:
\begin{equation} \label{eq:G(E/F)}
    \mathcal{G}(E/F) := \begin{cases}
        \Aut_\mathcal{O}(E_\text{tors}) &\text{ if } K\subseteq F, \\
        \left\langle \Aut_\mathcal{O}(E_\text{tors}), \rho_E(\tau) \right\rangle &\text{ if } K \not \subseteq F
    \end{cases}
\end{equation}
where, if $K \not\subseteq F$, we let $\tau \in G_F$ be any automorphism that restricts to the unique non-trivial element of $\operatorname{Gal}(F K/F)$. 
Then, in the previous discussion, we have shown that $\mathcal{G}(E/F)$ is a profinite group, which contains $\rho_E(G_F)$ as an open subgroup.
Moreover, if we define the \textit{CM index} $\mathcal{I}(E/F)$ to be: 
\begin{equation} \label{eq:notation_index}
    \mathcal{I}(E/F):= \left| \mathcal{G}(E/F) : \rho_E(G_F) \right|
\end{equation}
then by \cref{lem: base change index} we have that $\mathcal{I}(E/F)=\mathcal{I}(E/FK)$ is finite. 

\section{A formula for the index}
\label{sec:proof}

The aim of this section is to provide a proof of \cref{thm:main_theorem}. We place ourselves in the setting of the theorem, by fixing an order $\mathcal{O}$ inside an imaginary quadratic field $K \subseteq \overline{\mathbb{Q}}$ and an elliptic curve $E$ which has complex multiplication by $\mathcal{O}$ and is defined over a number field $F \subseteq \overline{\mathbb{Q}}$. 
We explained in \cref{lem: base change index} that $\mathcal{I}(E/F) = \mathcal{I}(E/F K)$, hence we will assume without loss of generality that $K \subseteq F$. This in particular implies that $H_\mathcal{O} \subseteq F$, where, as in \cref{thm:main_theorem}, $H_\mathcal{O}$ denotes the ring class field of $K$ relative to the order $\mathcal{O}$.

The formula \eqref{eq:CM_index_formula} appearing in \cref{thm:main_theorem} is a byproduct of the first main theorem of complex multiplication (see \cite[Chapter 10, Theorem 8]{Lang_1987}). The latter asserts the existence of a unique continuous group homomorphism $\mu \colon \mathbb{A}_F^\times \to K^\times$ such that, for every $s \in \mathbb{A}_F^\times$ and every complex uniformisation $\xi \colon \mathbb{C} \twoheadrightarrow E(\mathbb{C})$ with $\Lambda := \ker(\xi) \subseteq K$, the following diagram:
    \[
        \begin{tikzcd}[column sep = 2.5cm]
            K/\Lambda \rar["\left( \mu(s) \operatorname{N}_{F/K}(s^{-1}) \right) \cdot"] \dar["\xi"] & K/\Lambda \dar["\xi"] \\
            E_\text{tors} \rar["{[s,F]}"] & E_\text{tors}
        \end{tikzcd}
    \]
    commutes. Here $\mathrm{N}_{F/K} \colon \mathbb{A}_F^\times \to \mathbb{A}_K^\times$ denotes the idelic norm map, $[\cdot,K] \colon \mathbb{A}_K^\times \twoheadrightarrow \operatorname{Gal}(K^\text{ab}/K)$ denotes the global Artin map, and the upper horizontal arrow is given by the idelic multiplication map (see \cite[Page~100]{Lang_1987}).
    In particular, the action of the idèle $\mu(s) \operatorname{N}_{F/K}(s^{-1}) \in \mathbb{A}_K^\times$ on the set of lattices contained in $K$, described in \cite[Chapter~8, Theorem~10]{Lang_1987}, fixes $\Lambda$. Since $\Lambda$ is an invertible fractional ideal of $\mathcal{O}$, this implies that $\mu(s) \operatorname{N}_{F/K}(s^{-1})$ fixes also $\mathcal{O}$.
    Thus, the finite idèle $(\mu(s) \operatorname{N}_{F/K}(s^{-1}))_\text{fin}$ lies in the subgroup $\widehat{\mathcal{O}}^\times \subseteq \mathbb{A}_K^\times$.
    Hence, the association $s \mapsto (\mu(s) \operatorname{N}_{F/K}(s^{-1}))_\text{fin}$ defines a continuous group homomorphism $\theta_E \colon \mathbb{A}_F^\times \to \widehat{\mathcal{O}}^\times$, which makes the following diagram:
    \begin{equation}
        \label{eq:main_theorem_CM_square}
        \begin{tikzcd}
        \mathbb{A}_F^{\times} \arrow[r,"\theta_E"] \arrow[d,two heads,"{\restr{[\cdot,F]}{F(E_\text{tors})}}" swap,outer sep=0.1cm] & \widehat{\mathcal{O}}^\times \arrow[d,"\sim" rotated_label] \\
        \operatorname{Gal}(F(E_{\text{tors}})/F) \arrow[r,hook,"\rho_E"] & \operatorname{Aut}_{\mathcal{O}}(E_{\text{tors}})
    \end{tikzcd}
    \end{equation}
    commute.
    We are now ready to prove \cref{thm:main_theorem}.

\begin{proof}[Proof of \cref{thm:main_theorem}]
    Define $\psi_E$ to be the surjective group homomorphism:
\[
        \begin{tikzcd}
            \psi_E \ \colon \ \operatorname{Aut}_{\mathcal{O}}(E_{\text{tors}})
            \cong \widehat{\mathcal{O}}^{\times} \arrow[r,two heads,"\mathfrak{a}_{\mathcal{O}}"] & \operatorname{Gal}(K^{\text{ab}}/H_{\mathcal{O}})
        \end{tikzcd}
\]
where $\mathfrak{a}_\mathcal{O} \colon \widehat{\mathcal{O}}^\times \twoheadrightarrow \operatorname{Gal}(K^\text{ab}/H_\mathcal{O})$ is the composition of the natural embedding $\widehat{\mathcal{O}}^\times \hookrightarrow \mathbb{A}_K^\times$ with the map $\mathbb{A}_K^\times \twoheadrightarrow G_K^\text{ab}$ given by $s \mapsto [s^{-1},K]$.
It is easy to show that $\psi_E$ fits in a short exact sequence:
\begin{equation} \label{eq:exact_sequence_psi}
        1 \to \operatorname{Aut}_F(E) \to \operatorname{Aut}_\mathcal{O}(E_\text{tors}) \xrightarrow{\psi_E} \operatorname{Gal}(K^\text{ab}/H_\mathcal{O}) \to 1
\end{equation}
because $\ker(\mathfrak{a}_\mathcal{O}) = \ker([\cdot,K]) \cap \widehat{\mathcal{O}}^\times = K^\times \cap \widehat{\mathcal{O}}^\times = \mathcal{O}^\times$.
Then, we can form the following square:
\begin{equation} \label{eq:surjections_square}
        \begin{tikzcd}
            \operatorname{Gal}(F(E_\text{tors})/F) \rar[hook,"\rho_E"] \dar[two heads] & \operatorname{Aut}_\mathcal{O}(E_\text{tors}) \dar[two heads,"\psi_E"] \\
            \operatorname{Gal}(K^\text{ab}/F \cap K^\text{ab}) \rar[hook,"\iota"] & \operatorname{Gal}(K^\text{ab}/H_\mathcal{O})
        \end{tikzcd}
\end{equation}
where the map on the left is defined by the composition:
    \[
        \operatorname{Gal}(F(E_\text{tors})/F) \twoheadrightarrow \operatorname{Gal}(F K^\text{ab}/F) \altxrightarrow{\sim} \operatorname{Gal}(K^\text{ab}/F \cap K^\text{ab})
    \]
of a restriction map and a natural isomorphism coming from Galois theory.
We claim that \eqref{eq:surjections_square} commutes.
Indeed, extending \eqref{eq:surjections_square} by diagram \eqref{eq:main_theorem_CM_square} gives the following square:
\begin{equation} \label{eq:functoriality_of_CFT}
        \begin{tikzcd}
        \mathbb{A}_F^{\times} \arrow[r,"\theta_E"] \arrow[d,two heads,"{\restr{[\cdot,F]}{K^{\text{ab}}}}" swap,outer sep=0.1cm] & \widehat{\mathcal{O}}^\times \arrow[d,"\mathfrak{a}_{\mathcal{O}}",two heads] \\
        \operatorname{Gal}(K^{\text{ab}}/F \cap K^{\text{ab}}) \arrow[r,hook,"\iota"] & \operatorname{Gal}(K^{\text{ab}}/H_{\mathcal{O}})
    \end{tikzcd}
\end{equation}
which commutes because, for every $s \in \mathbb{A}_F^\times$, one has: 
\[
    \mathfrak{a}_\mathcal{O}(\theta_E(s)) = [(\mu(s) \cdot \operatorname{N}_{F/K}(s^{-1}))^{-1},K] = [\operatorname{N}_{F/K}(s),K] = \iota(\restr{[s,F]}{K^\text{ab}})
\]
using the fact that $K^\times \cdot (K \otimes_\mathbb{Q} \mathbb{R})^\times \subseteq \ker([\cdot,K])$, as explained in \cite[Chapter~IX, Theorem~3]{Artin_Tate_1968}, and the functoriality of class field theory \cite[Chapter~VI, Proposition~5.2]{Neukirch_1999}.
Thus \eqref{eq:surjections_square} commutes, because \eqref{eq:functoriality_of_CFT} does, and the vertical maps in the commutative diagram \eqref{eq:main_theorem_CM_square} are surjective.

Now, \eqref{eq:exact_sequence_psi} and \eqref{eq:surjections_square} induce the following commutative diagram:
    \begin{equation} \label{eq:snake_diagram}
        \begin{tikzcd}
            1 \rar & \operatorname{Gal}(F(E_\text{tors})/F K^\text{ab}) \rar \dar[hook,dashed,"\iota'"] & \operatorname{Gal}(F(E_\text{tors})/F) \rar \dar[hook,"\rho_E"] \arrow[dr, phantom, "\eqref{eq:surjections_square}"] & \operatorname{Gal}(K^\text{ab}/F \cap K^\text{ab}) \rar \dar[hook,"\iota"] & 1 \\
            1 \rar & \operatorname{Aut}_F(E) \rar & \operatorname{Aut}_\mathcal{O}(E_\text{tors}) \rar["\psi_E" swap] & \operatorname{Gal}(K^\text{ab}/H_\mathcal{O}) \rar & 1
        \end{tikzcd}
    \end{equation}
    whose rows are exact. This shows in particular that the degree of the extension $F K^\text{ab} \subseteq F(E_\text{tors})$ is finite and divides $\lvert \operatorname{Aut}_F(E) \rvert = \lvert \mathcal{O}^\times \rvert$. Finally, the snake lemma gives:
    \[
        \mathcal{I}(E/F) = \lvert \operatorname{coker}(\rho_E) \rvert = \lvert \operatorname{coker}(\iota) \rvert \cdot \lvert \operatorname{coker}(\iota') \rvert = [F \cap K^{\text{ab}} \colon H_\mathcal{O}] \cdot \frac{\lvert \mathcal{O}^{\times} \rvert}{[F(E_{\text{tors}}): F K^{\text{ab}}] }
    \]
    which allows us to conclude.
\end{proof}

An immediate consequence of \cref{thm:main_theorem} is the following improvement of the bounds provided by \cite[Theorem~6.6]{Lombardo_2017} and \cite[Corollary~1.5]{Bourdon_Clark_2020}.

\begin{corollary}
Let $\mathcal{O}$ be an order inside an imaginary quadratic field $K$.
For every number field $F \subseteq \overline{\mathbb{Q}}$, and every elliptic curve $E_{/F}$ with complex multiplication by $\mathcal{O}$, the index $\mathcal{I}(E/F)$ divides $[(F K) \cap K^\text{ab} \colon H_\mathcal{O}] \cdot \lvert \mathcal{O}^\times \rvert$.
\end{corollary}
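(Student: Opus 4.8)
The plan is to deduce everything directly from \cref{thm:main_theorem}, since the corollary is essentially a divisibility repackaging of the index formula \eqref{eq:CM_index_formula}. First I would invoke \cref{thm:main_theorem} to write
\[
    \mathcal{I}(E/F) = [(F K) \cap K^{\text{ab}} \colon H_\mathcal{O}] \cdot \frac{\lvert \mathcal{O}^{\times} \rvert}{[F(E_{\text{tors}}): F K^{\text{ab}}]}.
\]
Both sides are positive integers, so the right-hand side is in particular an integer. Multiplying through by $[F(E_\text{tors}) \colon F K^\text{ab}]$ shows that
\[
    \mathcal{I}(E/F) \cdot [F(E_\text{tors}) \colon F K^\text{ab}] = [(F K) \cap K^\text{ab} \colon H_\mathcal{O}] \cdot \lvert \mathcal{O}^\times \rvert,
\]
whence $\mathcal{I}(E/F)$ divides $[(F K) \cap K^\text{ab} \colon H_\mathcal{O}] \cdot \lvert \mathcal{O}^\times \rvert$.

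The one point that genuinely needs to be in place — and which is the only non-formal ingredient — is that $[F(E_\text{tors}) \colon F K^\text{ab}]$ is a finite positive integer; this is recorded in the discussion following \cref{thm:main_theorem} and established in its proof via the commutative diagram \eqref{eq:snake_diagram}, where the dashed injection $\iota'$ realises $\operatorname{Gal}(F(E_\text{tors})/F K^\text{ab})$ as a subgroup of $\operatorname{Aut}_F(E)$, so that in fact $[F(E_\text{tors}) \colon F K^\text{ab}]$ divides $\lvert \operatorname{Aut}_F(E) \rvert = \lvert \mathcal{O}^\times \rvert$. One does not even need this finer divisibility for the corollary — mere finiteness and integrality of the quotient suffice — but it is already available. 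I expect no real obstacle here: once \cref{thm:main_theorem} and the finiteness of $F K^\text{ab} \subseteq F(E_\text{tors})$ are granted, the argument is a one-line manipulation of the formula.

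Finally, I would remark (as the text around \eqref{eq:upper_bound} already does) that the quantity $[(F K) \cap K^\text{ab} \colon H_\mathcal{O}] \cdot \lvert \mathcal{O}^\times \rvert$ depends only on $F$ and on the order $\mathcal{O}$ — not on the particular curve $E$ with CM by $\mathcal{O}$ — since $H_\mathcal{O} = K(j(E)) \subseteq F K$ and $\mathcal{O}^\times$ are determined by $\mathcal{O}$; this is what makes the corollary an improvement over the bounds of \cite[Theorem~6.6]{Lombardo_2017} and \cite[Corollary~1.5]{Bourdon_Clark_2020}, and it follows with no extra work.
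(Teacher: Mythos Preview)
Your proposal is correct and matches the paper's treatment: the corollary is stated there as ``an immediate consequence of \cref{thm:main_theorem}'' with no further argument, exactly because clearing the denominator in \eqref{eq:CM_index_formula} yields the divisibility at once. Your additional remark that $[F(E_\text{tors}) \colon F K^\text{ab}]$ divides $\lvert \mathcal{O}^\times \rvert$ via \eqref{eq:snake_diagram} is accurate but, as you note, unnecessary for the corollary itself.
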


Moreover, \cref{thm:main_theorem} can be rephrased in a simpler fashion, if one assumes that $\lvert \mathcal{O}^\times \rvert = 2$, which holds for every order $\mathcal{O}$ of discriminant $\Delta_\mathcal{O} < -4$.

\begin{corollary} \label{cor:index_non_0_1728}
Let $\mathcal{O}$ be an order inside an imaginary quadratic field $K$, and suppose that $\Delta_\mathcal{O} < -4$.
Let $E$ be an elliptic curve with complex multiplication by $\mathcal{O}$, defined over a number field $F \subseteq \overline{\mathbb{Q}}$. Then, the following equality:
\begin{equation} \label{eq:index_j_not_0_1728}
    \frac{\mathcal{I}(E/F)}{[(F K) \cap K^{\text{ab}}:H_\mathcal{O}] } =
    \begin{cases}
        2, \ & \text{if} \ F(E_\text{tors}) = F K^\text{ab} \\
        1, \ & \text{otherwise} \\
    \end{cases}
\end{equation}
holds.
\end{corollary}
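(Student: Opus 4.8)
The plan is to derive \cref{cor:index_non_0_1728} directly from \cref{thm:main_theorem}, the point being that the hypothesis $\Delta_\mathcal{O} < -4$ forces $\lvert \mathcal{O}^\times \rvert = 2$, while the denominator $[F(E_\text{tors})\colon FK^\text{ab}]$ appearing in \eqref{eq:CM_index_formula} is constrained to lie in $\{1,2\}$.

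First I would record the classical computation of the unit group of an imaginary quadratic order: one has $\mathcal{O}^\times = \mu_\infty(K) \cap \mathcal{O}$, and the only imaginary quadratic fields containing roots of unity other than $\pm 1$ are $\mathbb{Q}(i)$ and $\mathbb{Q}(\sqrt{-3})$, in which moreover only the maximal order contains those extra roots of unity; hence $\lvert \mathcal{O}^\times\rvert = 2$ unless $\Delta_\mathcal{O}\in\{-3,-4\}$. Under the standing assumption $\Delta_\mathcal{O} < -4$ we therefore have $\lvert \mathcal{O}^\times\rvert = 2$, and substituting this into \eqref{eq:CM_index_formula} gives
\[
    \frac{\mathcal{I}(E/F)}{[(FK)\cap K^\text{ab}\colon H_\mathcal{O}]} \;=\; \frac{2}{[F(E_\text{tors})\colon FK^\text{ab}]}.
\]

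Next I would invoke the fact --- established in the course of proving \cref{thm:main_theorem}, specifically from the exactness of the top row of the diagram \eqref{eq:snake_diagram} together with the injectivity of $\iota'$ --- that $F(E_\text{tors})/FK^\text{ab}$ is a finite extension whose degree divides $\lvert \Aut_F(E)\rvert = \lvert\mathcal{O}^\times\rvert = 2$. Consequently this degree is either $1$ or $2$, and it equals $1$ exactly when $F(E_\text{tors}) = FK^\text{ab}$. Plugging the two possibilities into the displayed identity above yields the value $2$ in the first case and the value $1$ in the second, which is precisely \eqref{eq:index_j_not_0_1728}.

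There is no genuine obstacle here: the corollary is a direct specialisation of \cref{thm:main_theorem}. The only things to be slightly careful about are the elementary bookkeeping in the unit-group computation (checking that ``$\Delta_\mathcal{O} < -4$'' is exactly the condition that rules out the two exceptional orders $\mathbb{Z}[i]$ and $\mathbb{Z}[\zeta_3]$) and the observation that every quantity in \eqref{eq:CM_index_formula} is insensitive to replacing $F$ by $FK$ --- indeed $FK^\text{ab}\subseteq F(E_\text{tors})$ forces $K\subseteq F(E_\text{tors})$ and hence $F(E_\text{tors}) = (FK)(E_\text{tors})$ --- so that, if convenient, one may run the argument under the normalisation $K\subseteq F$ used throughout \cref{sec:proof}.
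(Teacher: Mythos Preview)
Your proposal is correct and matches the paper's approach: the paper simply notes that \cref{cor:index_non_0_1728} is the rephrasing of \cref{thm:main_theorem} under the assumption $\lvert\mathcal{O}^\times\rvert = 2$, which holds precisely when $\Delta_\mathcal{O} < -4$. Your write-up supplies exactly the two ingredients implicit in that remark --- the unit-group computation and the divisibility $[F(E_\text{tors})\colon FK^\text{ab}] \mid \lvert\mathcal{O}^\times\rvert$ extracted from \eqref{eq:snake_diagram} --- so there is nothing to add.
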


The dichotomy provided by \eqref{eq:index_j_not_0_1728} reflects a property of CM elliptic curves introduced by Shimura in \cite[Pages~216-218]{Shimura_1994}, and studied in \cite[\S~5]{Campagna_Pengo_2020}. 
In particular, \cref{cor:index_non_0_1728} generalises \cite[Corollary~5.8]{Campagna_Pengo_2020}, which was proved by different means.

\begin{remark}
    Specializing \cref{thm:main_theorem} to $F = \mathbb{Q}(j(E))$ we see that $\mathcal{I}(E/F) \in \{ 1, \lvert \mathcal{O}^\times \rvert \}$. 
    However, this does not allow to describe explicitly the image $\rho_E(G_F)$ as a subgroup of $\operatorname{Aut}_\mathbb{Z}(E_\text{tors}) \cong \operatorname{GL}_2(\widehat{\mathbb{Z}})$, since the latter can vary amongst infinitely many possible subgroups, as it happens already for $F = \mathbb{Q}$ (see \cite[Theorem~6.3]{Campagna_Pengo_2020}).
    On the other hand, the image of $\rho_E(G_F)$ under the natural projections $\operatorname{GL}_2(\widehat{\mathbb{Z}}) \twoheadrightarrow \operatorname{GL}_2(\mathbb{Z}_\ell)$ for $\ell \in \mathbb{N}$ a prime, belongs, up to conjugation, to a finite list of subgroups which has been explicitly determined by Lozano-Robledo \cite{Lozano-Robledo_2019}. 
\end{remark}

To conclude this section, we observe that \cref{thm:main_theorem} implies that the index $\mathcal{I}(E/F)$ is invariant under appropriate twisting of the elliptic curve $E$, as specified by the following corollary.

\begin{corollary}
\label{cor:index_abelian_twist}
Let $\mathcal{O}$ be an order inside an imaginary quadratic field $K$, and set $d := \lvert \mathcal{O}^\times \rvert$. 
Let $E_{/F}$ be an elliptic curve defined over a number field $F \subseteq \overline{\mathbb{Q}}$ such that $\operatorname{End}_F(E) \cong \mathcal{O}$.  
Suppose that $E$ is the twist of another elliptic curve $E'_{/F}$ by $\sqrt[d]{\alpha}$, for some $\alpha \in F^\times$ such that $L := F(\sqrt[d]{\alpha}) \subseteq F K^\text{ab}$.
Then $\mathcal{I}(E/F)=\mathcal{I}(E'/F)$.
\end{corollary}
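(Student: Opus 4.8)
The plan is to reduce to the case $K \subseteq F$ via \cref{lem: base change index}, and then to show that $E$ and $E'$ give rise to Galois representations whose images inside $\operatorname{Aut}_\mathcal{O}(E_\text{tors}) \cong \operatorname{Aut}_\mathcal{O}(E'_\text{tors}) \cong \widehat{\mathcal{O}}^\times$ differ in a way that is invisible to the index computation of \cref{thm:main_theorem}. The key observation is that, since $E$ is the twist of $E'$ by $\sqrt[d]{\alpha}$ and $E$, $E'$ both have $\operatorname{End}_F \cong \mathcal{O}$, the two curves become isomorphic over the cyclic extension $L = F(\sqrt[d]{\alpha})$, and the twisting cocycle factors through $\operatorname{Gal}(L/F) \hookrightarrow \mathcal{O}^\times = \operatorname{Aut}_F(E)$. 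Concretely, there is a character $\chi \colon G_F \to \mathcal{O}^\times$ with kernel $G_L$ such that $\rho_E = \chi \cdot \rho_{E'}$ as maps $G_F \to \widehat{\mathcal{O}}^\times$ (after the canonical identifications), where $\chi$ is viewed as landing in $\mathcal{O}^\times \subseteq \widehat{\mathcal{O}}^\times$ via the fixed embedding $\mathcal{O} \hookrightarrow \overline{\mathbb Q}$.

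The first step is therefore to make this relation precise: fix an isomorphism $\phi \colon E_L \altxrightarrow{\sim} E'_L$ defined over $L$, and for $\sigma \in G_F$ set $\chi(\sigma) := \phi^{-1} \circ {}^\sigma\phi \in \operatorname{Aut}_{\overline F}(E) \cong \mathcal{O}^\times$; one checks $\chi$ is a homomorphism trivial on $G_L$, and that under the identification of Tate modules induced by $\phi$ one has $\varrho_E(\sigma) = \chi(\sigma)\,\varrho_{E'}(\sigma)$, hence the same relation for $\rho_E, \rho_{E'}$. The second step is to feed this into the formula \eqref{eq:CM_index_formula}. By \cref{thm:main_theorem} applied to $E'$, we have $\mathcal{I}(E'/F) = [(FK)\cap K^\text{ab} : H_\mathcal{O}] \cdot |\mathcal{O}^\times| / [F(E'_\text{tors}) : FK^\text{ab}]$, and similarly for $E$; since the first factor $[(FK)\cap K^\text{ab} : H_\mathcal{O}]$ depends only on $F$, $K$ and $\mathcal{O}$ (and $H_\mathcal{O} = K(j(E)) = K(j(E'))$ because twisting preserves the $j$-invariant), it suffices to prove $[F(E_\text{tors}) : FK^\text{ab}] = [F(E'_\text{tors}) : FK^\text{ab}]$. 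For this one analyses the field $F(E_\text{tors})$: since $\rho_E = \chi \cdot \rho_{E'}$ and $\chi$ has image in the finite group $\mathcal{O}^\times$ with fixed field $L \subseteq FK^\text{ab}$, the fixed field of $\rho_E(G_F)$ and that of $\rho_{E'}(G_F)$ differ only inside $FK^\text{ab}$; more precisely, working inside the bottom row of the snake diagram \eqref{eq:snake_diagram}, both $\operatorname{Gal}(F(E_\text{tors})/FK^\text{ab})$ and $\operatorname{Gal}(F(E'_\text{tors})/FK^\text{ab})$ embed into $\operatorname{Aut}_F(E) = \mathcal{O}^\times$ with the same image, because the cocycle $\chi$ dies once we restrict to $G_{FK^\text{ab}} \subseteq G_L$ — that is, $\rho_E|_{G_{FK^\text{ab}}} = \rho_{E'}|_{G_{FK^\text{ab}}}$ after the identification $\phi$, so these two subgroups of $\operatorname{Aut}_\mathcal{O}(E_\text{tors}) = \operatorname{Aut}_\mathcal{O}(E'_\text{tors})$ literally coincide. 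Hence $[F(E_\text{tors}) : FK^\text{ab}] = [F(E'_\text{tors}) : FK^\text{ab}]$, and \eqref{eq:CM_index_formula} gives $\mathcal{I}(E/F) = \mathcal{I}(E'/F)$.

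The main obstacle I expect is bookkeeping the identifications correctly: one must check that twisting by $\sqrt[d]{\alpha}$ indeed produces a curve with $\operatorname{End}_F \cong \mathcal{O}$ (not merely $\operatorname{End}_{\overline F} \cong \mathcal{O}$) — this is where the hypothesis $L \subseteq FK^\text{ab}$, together with $K \subseteq F$, is used, since it forces the relevant automorphisms to be compatible with the $\mathcal{O}$-action — and that the isomorphism $\operatorname{Aut}_\mathcal{O}(E_\text{tors}) \cong \operatorname{Aut}_\mathcal{O}(E'_\text{tors})$ induced by $\phi$ is the one implicitly fixed when both are identified with $\widehat{\mathcal{O}}^\times$ via \cref{lem:normalized_isomorphism}, so that the equality "$\rho_E|_{G_{FK^\text{ab}}} = \rho_{E'}|_{G_{FK^\text{ab}}}$'' is an equality of subgroups of a single fixed group. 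Once these identifications are pinned down, the argument is a short diagram chase; alternatively, one can bypass the cocycle entirely and argue directly that $F(E_\text{tors})$ and $F(E'_\text{tors})$ have the same compositum with $FK^\text{ab}$ and the same intersection-pattern with it, which also yields the claim via \eqref{eq:CM_index_formula}.
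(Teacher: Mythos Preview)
Your proposal is correct and follows essentially the same route as the paper: both arguments use the twisting identity $\rho_E = \chi_\alpha \cdot \rho_{E'}$ together with the observation that $\chi_\alpha$ vanishes on $G_{FK^{\text{ab}}}$ (because $L \subseteq FK^{\text{ab}}$), and then conclude via \eqref{eq:CM_index_formula}. Two minor remarks: the paper actually proves the slightly stronger statement $F(E_\text{tors}) = F(E'_\text{tors})$ (which your restriction argument also yields, since equal restricted representations have equal kernels), and your opening reduction via \cref{lem: base change index} is superfluous --- the hypothesis $\operatorname{End}_F(E) \cong \mathcal{O}$ already forces $K \subseteq F$, which also immediately gives $\operatorname{End}_F(E') \cong \mathcal{O}$ without needing $L \subseteq FK^{\text{ab}}$.
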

\begin{proof}
First of all, note that the extension $F \subseteq L$ is well defined, because we have the inclusion $K \subseteq F$, by the hypothesis $\operatorname{End}_F(E) \cong \mathcal{O}$. Thus, the group of $d$-th roots of unity $\mathcal{O}^\times$ is also contained in $F$.
Then, one has: 
\begin{equation} \label{eq:twisting_formula}
    \rho_E(\sigma) = \rho_{E'}(\sigma) \cdot \chi_\alpha(\sigma)    
\end{equation}
for every $\sigma \in G_F$, where $\rho_{E} \colon G_F \to \mathcal{G}(E/F) \cong  \widehat{\mathcal{O}}^\times$ and $\rho_{E'} \colon G_F \to \mathcal{G}(E'/F) \cong  \widehat{\mathcal{O}}^\times$ are the Galois representations associated to $E$ and $E'$.
Moreover, $\chi_\alpha \colon G_F \to \mathcal{O}^\times \subseteq \widehat{\mathcal{O}}^\times$ is the Kummer character attached to the extension $F \subseteq L$, defined by the equality $\sigma(\sqrt[d]{\alpha}) = \chi_\alpha(\sigma) \cdot \sqrt[d]{\alpha}$ for every $\sigma \in G_F$.

Now, for every $\sigma \in \operatorname{Gal}(\overline{\mathbb{Q}}/L F(E'_\text{tors}))$, we have that $\rho_{E'}(\sigma) = \chi_\alpha(\sigma) = 1$, hence \eqref{eq:twisting_formula} implies that $\rho_E(\sigma) = 1$. Thus, the inclusion $F(E_\text{tors}) \subseteq L F(E'_\text{tors})$ holds.
On the other hand, if $\tau \in \operatorname{Gal}(\overline{\mathbb{Q}}/F(E_\text{tors}))$, the hypothesis $L \subseteq F K^\text{ab}$ and the inclusion $F K^\text{ab} \subseteq F(E_\text{tors})$ imply that $\tau$ fixes $L$, and thus that $\rho_E(\tau) = \chi_\alpha(\tau) = 1$.
Therefore, \eqref{eq:twisting_formula} gives that $\rho_{E'}(\tau) = 1$. Hence, the opposite inclusion $L F(E'_\text{tors}) \subseteq F(E_\text{tors})$ holds.
Thus, we have that $F(E_\text{tors}) = L F(E'_\text{tors}) = F(E'_\text{tors})$, where the last equality follows from the hypothesis $L \subseteq F K^\text{ab}$ and the inclusion $F K^\text{ab} \subseteq F(E'_\text{tors})$.
Finally, using \cref{thm:main_theorem}, one gets that $\mathcal{I}(E/F)=\mathcal{I}(E'/F)$, as we wanted to prove. 
\end{proof}

\section{How to compute the index in practice} \label{sec:algorithm}

In this section we show how one can concretely compute the index $\mathcal{I}(E/F)$ for any given CM elliptic curve $E$ defined over a number field $F$. 
Thanks to \cref{lem: base change index}, we can and will assume throughout this section, without loss of generality, that the number field $F$ contains the CM field $K$.

The starting point of our discussion is the formula \eqref{eq:CM_index_formula} provided by \cref{thm:main_theorem}.
Let us observe that \eqref{eq:CM_index_formula}, albeit completely explicit, involves the degree of the finite extension $F K^\text{ab} \subseteq F(E_\text{tors})$ which \textit{a priori} can not be implemented in a computer, because $F K^\text{ab}$ is an infinite algebraic extension of $\mathbb{Q}$. 
Nevertheless, the following result shows how one can rewrite \eqref{eq:CM_index_formula} as an equality involving only finite abelian groups and number fields.

\begin{proposition} \label{prop:finite_computation}
    Let $\mathcal{O}$ be an order inside an imaginary quadratic field $K \subseteq \overline{\mathbb{Q}}$. Fix a number field $F \subseteq \overline{\mathbb{Q}}$ and an elliptic curve $E_{/F}$ such that $\operatorname{End}_F(E) \cong \mathcal{O}$.
    Then, we have:
    \begin{equation} \label{eq:finite_computation}
        \mathcal{I}(E/ F) = \frac{\lvert \mathcal{O}^\times \rvert \cdot [L \cap K^\text{ab} \colon K]}{\lvert \operatorname{Pic}(\mathcal{O}) \rvert \cdot [L \colon F]}
    \end{equation}
    for every finite extension $F \subseteq L$ such that $F(E_\text{tors}) = L K^\text{ab}$ is the compositum of $L$ and $K^\text{ab}$ inside $\overline{\mathbb{Q}}$.
\end{proposition}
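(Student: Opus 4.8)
The plan is to derive \eqref{eq:finite_computation} from the closed formula \eqref{eq:CM_index_formula} of \cref{thm:main_theorem} by a purely Galois-theoretic manipulation that trades the infinite extension $F K^\text{ab}$ for the number field $L$. Since $\operatorname{End}_F(E) \cong \mathcal{O}$ forces $K \subseteq F$ (see \cite[Chapter~II, Proposition~30]{Shimura_1998}), we have $FK = F$ and $H_\mathcal{O} = K(j(E)) \subseteq F$, so \cref{thm:main_theorem} specialises to
\[
    \mathcal{I}(E/F) = [F \cap K^\text{ab} \colon H_\mathcal{O}] \cdot \frac{\lvert \mathcal{O}^\times \rvert}{[F(E_\text{tors}) \colon F K^\text{ab}]} .
\]
The task is therefore to rewrite the denominator $[F(E_\text{tors}) \colon F K^\text{ab}] = [L K^\text{ab} \colon F K^\text{ab}]$, where I have used the hypothesis $F(E_\text{tors}) = L K^\text{ab}$ and the inclusion $F K^\text{ab} \subseteq L K^\text{ab}$ coming from $F \subseteq L$.

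First I would record the standard compositum identity: for any finite extension $K \subseteq M$ inside $\overline{\mathbb{Q}}$, the extension $M K^\text{ab}/M$ is Galois (because $K^\text{ab}/K$ is), with $\operatorname{Gal}(M K^\text{ab}/M) \cong \operatorname{Gal}(K^\text{ab}/M \cap K^\text{ab})$, and comparing the two factorisations of $[M K^\text{ab} \colon K]$ along the towers $K \subseteq M \subseteq M K^\text{ab}$ and $K \subseteq K^\text{ab} \subseteq M K^\text{ab}$ yields $[M K^\text{ab} \colon K^\text{ab}] = [M \colon M \cap K^\text{ab}]$, which in particular is finite. Applying this with $M = L$ and with $M = F$, and using the tower $K^\text{ab} \subseteq F K^\text{ab} \subseteq L K^\text{ab}$, I obtain
\[
    [F(E_\text{tors}) \colon F K^\text{ab}] = \frac{[L K^\text{ab} \colon K^\text{ab}]}{[F K^\text{ab} \colon K^\text{ab}]} = \frac{[L \colon L \cap K^\text{ab}]}{[F \colon F \cap K^\text{ab}]} = \frac{[L \colon F]}{[L \cap K^\text{ab} \colon F \cap K^\text{ab}]},
\]
where the last step compares the two factorisations of $[L \colon F \cap K^\text{ab}]$ along $F \cap K^\text{ab} \subseteq L \cap K^\text{ab} \subseteq L$ and $F \cap K^\text{ab} \subseteq F \subseteq L$.

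Substituting this into the displayed formula for $\mathcal{I}(E/F)$, the product $[F \cap K^\text{ab} \colon H_\mathcal{O}] \cdot [L \cap K^\text{ab} \colon F \cap K^\text{ab}]$ telescopes to $[L \cap K^\text{ab} \colon H_\mathcal{O}]$ along the tower $H_\mathcal{O} \subseteq F \cap K^\text{ab} \subseteq L \cap K^\text{ab}$ (legitimate since $H_\mathcal{O} \subseteq F$ and $H_\mathcal{O} \subseteq K^\text{ab}$), so that $\mathcal{I}(E/F) = \lvert \mathcal{O}^\times \rvert \cdot [L \cap K^\text{ab} \colon H_\mathcal{O}] / [L \colon F]$. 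Finally I would factor $[L \cap K^\text{ab} \colon H_\mathcal{O}] = [L \cap K^\text{ab} \colon K] / [H_\mathcal{O} \colon K]$ and invoke the classical identification $\operatorname{Gal}(H_\mathcal{O}/K) \cong \operatorname{Pic}(\mathcal{O})$ from the theory of ring class fields \cite[\S~9]{Cox_2013}, giving $[H_\mathcal{O} \colon K] = \lvert \operatorname{Pic}(\mathcal{O}) \rvert$ and hence exactly \eqref{eq:finite_computation}. I do not anticipate a genuine obstacle: once \cref{thm:main_theorem} is in hand the argument is bookkeeping with field degrees, and the only point requiring mild care is the finiteness and compatibility of the intermediate degrees, which is supplied by the identity $[M K^\text{ab} \colon K^\text{ab}] = [M \colon M \cap K^\text{ab}]$ together with the fact—already noted in \cref{sec:proof}—that $[F(E_\text{tors}) \colon F K^\text{ab}]$ is finite and divides $\lvert \mathcal{O}^\times \rvert$.
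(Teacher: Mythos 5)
Your proof is correct and follows essentially the same route as the paper's: specialise \cref{thm:main_theorem} using $K \subseteq F$ (so $FK = F$), rewrite $[F(E_\text{tors}) \colon F K^\text{ab}] = [L K^\text{ab} \colon F K^\text{ab}] = [L \colon F]/[L \cap K^\text{ab} \colon F \cap K^\text{ab}]$, and absorb $[H_\mathcal{O} \colon K] = \lvert \operatorname{Pic}(\mathcal{O}) \rvert$, which is exactly the paper's bookkeeping up to the order of the telescoping. The only cosmetic caveat is that justifying $[M K^\text{ab} \colon K^\text{ab}] = [M \colon M \cap K^\text{ab}]$ by comparing factorisations of $[M K^\text{ab} \colon K]$ is loose since that degree is infinite (one should reduce to a large enough finite Galois subextension of $K^\text{ab}$), but the identity is standard and the paper invokes the equivalent degree identity without further comment.
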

\begin{proof}
    Combining \cref{thm:main_theorem} with the equality:
    \[
        [F(E_\text{tors}) \colon F K^\text{ab}] = [L K^\text{ab} \colon F K^\text{ab}] = \frac{[L \colon F]}{[L \cap K^\text{ab} \colon F \cap K^\text{ab}]} = \frac{[L \colon F] [F \cap K^\text{ab} \colon K]}{[L \cap K^\text{ab} \colon K]}
    \]
    allows us to conclude, because $[F \cap K^\text{ab} \colon K] = [F \cap K^\text{ab} \colon H_\mathcal{O}] \cdot \lvert \operatorname{Pic}(\mathcal{O}) \rvert$.
\end{proof}

Using \cref{prop:finite_computation}, we can now reduce the computation of $\mathcal{I}(E/F)$ to the following steps:
\begin{enumerate}[label*=\protect\fbox{S.\arabic{enumi}}]
    \item compute $\lvert \mathcal{O}^\times \rvert$ and $\lvert \operatorname{Pic}(\mathcal{O}) \rvert$; \label{step:1}
    \item find a finite extension $F \subseteq L$ such that $F(E_\text{tors}) = L K^\text{ab}$, and compute $[L \colon F]$; \label{step:2}
    \item compute $[L \cap K^\text{ab} \colon K]$, \textit{i.e.} the degree of the maximal abelian sub-extension of $K \subseteq L$. \label{step:3}
\end{enumerate}
To achieve \labelcref{step:1} one can use for instance the algorithms described in \cite[\S~5.3]{Cohen_1993} for the computation of $\lvert \operatorname{Pic}(\mathcal{O}) \rvert$, and the fact that $\lvert \mathcal{O}^\times \rvert = 2$ unless $\mathcal{O} = \mathbb{Z}[i]$, for which $\lvert \mathcal{O}^\times \rvert = 4$, or $\mathcal{O} = \mathbb{Z}\left[\frac{1 + \sqrt{-3}}{2}\right]$, for which $\lvert \mathcal{O}^\times \rvert = 6$.
Moreover, once \labelcref{step:2} has been carried out, and the extension $F \subseteq L$ is known, one can deal with the last step \labelcref{step:3} in (at least) two different ways: 
\begin{itemize}
    \item one can use the isomorphism:
    \begin{equation} \label{eq:abelianized_S_n}
        \operatorname{Gal}(L \cap K^\text{ab}/K) \cong \operatorname{Gal}(L'/K)^\text{ab}
    \end{equation}
    where $K \subseteq L' \subseteq L$ denotes the maximal sub-extension of $K \subseteq L$ which is Galois over $K$ and the notation $S^\text{ab}$ stands for the abelianization of a finite group $S$ (\textit{i.e.} its maximal abelian quotient). In order to compute the right hand side of \eqref{eq:abelianized_S_n}, note that, if $G := \operatorname{Gal}(\widetilde{L}/K)$ denotes the Galois group of the Galois closure $\widetilde{L}$ of the extension $K \subseteq L$, and $H^G \subseteq G$ denotes the normal closure of the subgroup $H := \operatorname{Gal}(\widetilde{L}/L)$ inside $G$, then we have $\operatorname{Gal}(L'/K) \cong G/H^G$. Since both $G$ and $H$ can be computed as subgroups of the symmetric group $\mathfrak{S}_n$ on $n = [L \colon K]$ letters (see \cite[\S~6.3]{Cohen_1993}), the abelian group $(G/H^G)^\text{ab}$ can also be explicitly computed, for instance using the functions \href{https://www.gap-system.org/Manuals/doc/ref/chap39.html#X7BDEA0A98720D1BB}{\textsc{NormalClosure}} and \href{https://www.gap-system.org/Manuals/doc/ref/chap39.html#X7BB93B9778C5A0B2}{\textsc{MaximalAbelianQuotient}} in GAP \cite{GAP4};
    \item one can compute $[L \cap K^\text{ab} \colon K]$ as the index of the norm group $\mathrm{T}_{\mathfrak{m}}(L/K) \subseteq \operatorname{Cl}_\mathfrak{m}(K)$, where $\mathfrak{m} := \delta_{L/K}$ is the relative discriminant of $K \subseteq L$, and $\operatorname{Cl}_\mathfrak{m}(K)$ denotes the ray class group of $K$ modulo $\mathfrak{m}$ (see \cite[Chapter~VI, \S~7]{Neukirch_1999}).
    This norm group $\mathrm{T}_{\mathfrak{m}}(L/K)$ can be computed using an adaptation of \cite[Algorithm~4.4.5]{Cohen_2000} to the non-Galois case.
    More precisely:
    \begin{itemize}
        \item in the fourth step of the aforementioned algorithm, one can proceed even if the polynomials $T_j$ do not have the same degree, by taking as $f$ the greatest common divisor of their degrees.
    Indeed, $\mathrm{T}_{\mathfrak{m}}(L/K)$ is by definition generated by the classes of $\mathfrak{p}^{f(\mathfrak{P}/\mathfrak{p})}$, where $\mathfrak{p} := \mathfrak{P} \cap \mathcal{O}_K$ and $\mathfrak{P}$ varies amongst the prime ideals of $\mathcal{O}_L$ coprime with $\mathfrak{m} \cdot \mathcal{O}_L$, and the inertia degrees $f(\mathfrak{P}/\mathfrak{p})$ correspond exactly to the degrees of the polynomials $T_j$ mentioned above;
    \item in the second step of the same algorithm, one should always output the matrix $M$ even if $\det(M) \neq [L \colon K]$.
    In fact, $\det(M)$ will be precisely the index of the norm group inside $\operatorname{Cl}_\mathfrak{m}(K)$, \textit{i.e.} the equality $[L \cap K^\text{ab} \colon K] = \det(M)$ holds.
    \end{itemize}
    Note that this modification does indeed work (assuming the validity of the Generalised Riemann Hypothesis), because $\mathrm{T}_\mathfrak{m}(L/K) = \mathrm{T}_\mathfrak{m}(L \cap K^\text{ab}/K)$ by \cite[Chapter~XIV, Theorem~7]{Artin_Tate_1968}.
\end{itemize}

Thus, in order to have a complete procedure for the computation of the CM index $\mathcal{I}(E/F)$, we only need to prove that one can always find a finite extension $F \subseteq L$ such that $F(E_\text{tors}) = L K^\text{ab}$ as in \labelcref{step:2}. The next proposition shows that one can take $L$ to be essentially any division field.

\begin{proposition} \label{prop:3_division_field}
    Let $\mathcal{O}$ be an order inside an imaginary quadratic field $K$ and let $E_{/F}$ be an elliptic curve defined over a number field $F \subseteq \overline{\mathbb{Q}}$ such that $\operatorname{End}_F(E) \cong \mathcal{O}$.
    Fix an ideal $I \subseteq \mathcal{O}$ and let $L := F(E[I])$ be the $I$-division field associated to $E$. Then $F(E_\text{tors}) = L K^\text{ab}$ whenever $\lvert \mathbb{Z}/(I \cap \mathbb{Z}) \rvert > 2$ if $j(E) \neq 0$, and $\lvert \mathbb{Z}/(I\cap \mathbb{Z}) \rvert > 3$ otherwise.
\end{proposition}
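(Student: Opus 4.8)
The plan is to show that $L K^{\text{ab}} = F(E_{\text{tors}})$ by exploiting the commutative square \eqref{eq:surjections_square} together with the exact sequences in \eqref{eq:snake_diagram}, reducing the equality of fields to a purely group-theoretic statement about $\operatorname{Aut}_F(E) \cong \mathcal{O}^\times$ acting on the $I$-torsion. Since we may assume $K \subseteq F$ (by \cref{lem: base change index}), we have $F K^{\text{ab}} \subseteq F(E_{\text{tors}})$, so $L K^{\text{ab}} \subseteq F(E_{\text{tors}})$ is automatic; the content is the reverse inclusion. Because $\operatorname{Gal}(F(E_{\text{tors}})/F K^{\text{ab}}) \hookrightarrow \operatorname{Aut}_F(E) \cong \mathcal{O}^\times$ by \eqref{eq:snake_diagram}, it suffices to prove that $\operatorname{Gal}(F(E_{\text{tors}})/L K^{\text{ab}})$ is trivial, \emph{i.e.} that every $\sigma \in G_F$ fixing $L K^{\text{ab}}$ acts trivially on all of $E_{\text{tors}}$. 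Such a $\sigma$ fixes $F K^{\text{ab}}$, hence $\rho_E(\sigma) \in \operatorname{Aut}_F(E)$, so $\rho_E(\sigma) = [\zeta]_E$ for some root of unity $\zeta \in \mathcal{O}^\times$; and $\sigma$ also fixes $L = F(E[I])$, so $[\zeta]_E$ acts trivially on $E[I]$. The goal is then to show $\zeta = 1$.

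The key step is therefore the following local/linear-algebra claim: if $\zeta \in \mathcal{O}^\times$ and $[\zeta]_E$ fixes $E[I]$ pointwise, then $\zeta = 1$, under the stated hypothesis on $N := \lvert \mathbb{Z}/(I \cap \mathbb{Z})\rvert$. I would argue this by reducing to the prime-power case: if $n := I \cap \mathbb{Z} = (N)$, then $E[I] \supseteq E[\gcd(I,\mathcal{O})]$ contains $E[\mathfrak{m}]$ for a suitable ideal, but more directly $E[I]$ contains $E[(N)] \cap (\text{stuff})$... actually cleaner: the condition $(\zeta - 1) \in \operatorname{Ann}(E[I])$ in $\mathcal{O}$ means $(\zeta - 1)\mathcal{O} \subseteq I$ (using that $E$ has CM by $\mathcal{O}$ and $E[I]$ is cyclic-or-rank-two as an $\mathcal{O}/I$-module, so its annihilator is exactly $I$). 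Hence $\zeta \equiv 1 \pmod{I}$, and intersecting with $\mathbb{Z}$ gives $\zeta - 1 \in \mathbb{Z}$ only after taking norms: write $\operatorname{N}_{K/\Q}(\zeta - 1) \in n\mathbb{Z} = N\mathbb{Z}$ after multiplying by the conjugate, or more simply note $(\zeta-1)\overline{(\zeta-1)} = \operatorname{N}(\zeta - 1) \in I \cap \mathbb{Z} = (N)$, so $N \mid \operatorname{N}_{K/\Q}(\zeta-1)$. Now one checks case by case: if $\lvert \mathcal{O}^\times \rvert = 2$ then $\zeta \in \{\pm 1\}$ and $\operatorname{N}(\zeta - 1) \in \{0, 4\}$, so $N > 2$ forces $\operatorname{N}(\zeta-1) = 0$, \emph{i.e.} $\zeta = 1$ — wait, $N \mid 4$ with $N > 2$ gives $N = 4$, so one must instead use that $\zeta - 1 \in I$ directly: $\zeta = -1$ gives $-2 \in I$, so $2 \in I \cap \mathbb{Z}$, contradicting $N > 2$. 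For $j(E) = 0$, $\mathcal{O} = \Z[\omega]$ with $\omega = \frac{1+\sqrt{-3}}{2}$ and $\zeta$ ranges over the six sixth roots of unity; for each $\zeta \neq 1$ one computes the ideal $(\zeta - 1)$ and its intersection with $\mathbb{Z}$ (e.g. $(\omega - 1)$ has norm $3$, $(-1-1)=(2)$ has norm $4$, $(\zeta^2 - 1)$ for $\zeta$ a primitive $6$th root, etc.), finding that $\zeta - 1 \in I$ implies $I \cap \mathbb{Z} \subseteq (3)$, hence $N \le 3$; so $N > 3$ forces $\zeta = 1$. The bound $N > 2$ in the $j(E) = 0$, $\lvert\mathcal{O}^\times\rvert = 6$ case is genuinely insufficient precisely because of the element $\omega - 1$ of norm $3$, which is what the proposition's hypothesis $N > 3$ rules out.

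The main obstacle is the annihilator computation: justifying that the $\mathcal{O}$-annihilator of $E[I]$ inside $\mathcal{O}$ is exactly $I$ (equivalently, that $[\alpha]_E$ kills $E[I]$ iff $\alpha \in I$), which is where the classical structure of torsion modules of CM elliptic curves — $E[I] \cong \mathcal{O}/I$ as $\mathcal{O}$-modules when $I$ is invertible, and a sum of two cyclic pieces in general — must be invoked; I would cite the relevant structure result (e.g. from \cite{si94} or the standard analytic description via $\mathbb{C}/\Lambda$ with $\Lambda$ a proper $\mathcal{O}$-ideal) and then the arithmetic of roots of unity in imaginary quadratic orders is entirely elementary. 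A secondary, purely bookkeeping point is verifying that $F(E[I])$ is well-defined independently of the embedding and of the normalisation $[\cdot]_E$, which follows from \cref{lem:normalized_isomorphism} and the fact that $E[I]$ as a \emph{subset} of $E(\overline{F})$ does not depend on the chosen isomorphism $\mathcal{O} \cong \operatorname{End}_{\overline{F}}(E)$ up to the action of $\operatorname{Gal}(K/\Q)$, which permutes the $E[I]$ for conjugate ideals $I$ but fixes $F(E[I])$ since we may take $I$ stable under conjugation, or simply absorb this into the hypothesis that only $I \cap \mathbb{Z}$ matters.
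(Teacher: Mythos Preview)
Your argument is essentially the paper's: both establish that any $\sigma$ fixing $FK^{\text{ab}}$ acts on $E_{\text{tors}}$ as $[\varepsilon]_E$ for some $\varepsilon \in \mathcal{O}^\times$, and then that fixing $E[I]$ forces $\varepsilon = 1$. The paper obtains the first step by quoting Shimura's analytic theorem \cite[Theorem~5.4]{Shimura_1994} directly, whereas you extract it from the injection $\iota'$ in diagram \eqref{eq:snake_diagram}; these are equivalent, and your route is arguably tidier within the paper's own framework, since that diagram was already derived from the main theorem of complex multiplication.

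On the second step the paper is terser than you are---it simply asserts ``$\varepsilon = 1$ by our assumptions on $I$''---so your elaboration via $(\zeta-1)\in\operatorname{Ann}_\mathcal{O}(E[I])\subseteq I$ and a case analysis on $\mathcal{O}^\times$ is a genuine addition. Two small points: you should not skip the case $j(E)=1728$, \textit{i.e.} $\mathcal{O}=\mathbb{Z}[i]$, where for each $\zeta\in\{i,-1,-i\}$ one has $(\zeta-1)\mathcal{O}\cap\mathbb{Z}=2\mathbb{Z}$, so $N>2$ again suffices; and the annihilator identity $\operatorname{Ann}_\mathcal{O}(E[I])=I$, while true, is not entirely automatic for non-invertible $I$ in a non-maximal order and deserves the citation you propose (the complex-analytic description $E[I]\cong(\Lambda:I)/\Lambda$ with $\Lambda$ a proper $\mathcal{O}$-ideal does the job). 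The paper is no more explicit on this point than you are.
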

\begin{proof}
    The inclusion $L K^\text{ab} \subseteq F(E_\text{tors})$ is clear, and the other containment can be proved as in \cite[Proposition~5.7]{Campagna_Pengo_2020}.
    More precisely, fix an embedding $\overline{\mathbb{Q}} \hookrightarrow \mathbb{C}$ and a complex uniformisation $\xi \colon \mathbb{C} \twoheadrightarrow E(\mathbb{C})$, such that $\ker(\xi) = \Lambda$ for some lattice $\Lambda \subseteq K$. Then \cite[Theorem~5.4]{Shimura_1994} shows that, for every field automorphism $\sigma \colon \mathbb{C} \to \mathbb{C}$ which fixes $F K^\text{ab}$, there exists a complex uniformisation $\xi' \colon \mathbb{C} \twoheadrightarrow E(\mathbb{C})$ such that $\sigma(\xi(z)) = \xi'(z)$ for every $z \in K$.
    This implies in particular that there exists $\varepsilon \in \mathcal{O}^\times$ such that $\sigma(P) = [\varepsilon]_E(P)$ for every $P \in E_\text{tors}$.
    If now $\sigma$ fixes also the division field $L = F(E[I])$, one must have $\varepsilon = 1$ by our assumptions on $I$. We conclude that $\sigma$ fixes the entire $F(E_\text{tors})$, which in turn implies that $F(E_\text{tors}) \subseteq L K^\text{ab}$ as we wanted to show. 
\end{proof}

Using \cref{prop:3_division_field}, we see that \labelcref{step:1}, \labelcref{step:2} and \labelcref{step:3} indeed describe a procedure to compute the index $\mathcal{I}(E/F)$ for any CM elliptic curve defined over any number field $F$. In practice, in \labelcref{step:2} it is convenient to choose a ``small'' division field $L=F(E[I])$, for instance by using $I=3\mathcal{O}$ (when $j(E) \neq 0$), which gives $[L \colon F] \leq 8$. 
However, if one already knows an elliptic curve $E'_{/F}$ such that $j(E')=j(E)$ and $F(E'_\text{tors}) = F K^\text{ab}$, then the subsequent \cref{prop:infinite_twist}, whose proof is analogous to that of \cref{cor:index_abelian_twist}, shows that one can take $L$ to be a Kummer extension of $F$ with degree $[L:F] \leq \lvert \mathcal{O}^\times \rvert \leq 6$. Since computations involving division fields of elliptic curves are typically hard, taking such an $L$ is certainly more advantageous in this situation.

\begin{proposition} \label{prop:infinite_twist}
    Let $\mathcal{O}$ be an order inside an imaginary quadratic field $K$, and set $d := \lvert \mathcal{O}^\times \rvert$. 
    Let $E_{/F}$ be an elliptic curve defined over a number field $F \subseteq \overline{\mathbb{Q}}$ such that $\operatorname{End}_F(E) \cong \mathcal{O}$.
    Suppose that there exists another elliptic curve $E'_{/F}$ such that $F(E'_\text{tors}) = F K^\text{ab}$, and that
    $E$ is the twist of $E'$ by $\sqrt[d]{\alpha}$, for some $\alpha \in F^\times$.
    Then $F(E_\text{tors}) = L K^\text{ab}$, where $L = F(\sqrt[d]{\alpha})$.
\end{proposition}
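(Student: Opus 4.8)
The plan is to mimic the argument used in the proof of \cref{cor:index_abelian_twist}, adapting it to the present situation where one of the two torsion fields is already known to be $F K^\text{ab}$. First I would record the twisting formula: since $E$ is the twist of $E'$ by $\sqrt[d]{\alpha}$ and $\operatorname{End}_F(E) \cong \mathcal{O}$ forces $K \subseteq F$ (hence $\mathcal{O}^\times \subseteq F$), one has, for every $\sigma \in G_F$,
\[
    \rho_E(\sigma) = \rho_{E'}(\sigma) \cdot \chi_\alpha(\sigma),
\]
where $\chi_\alpha \colon G_F \to \mathcal{O}^\times \subseteq \widehat{\mathcal{O}}^\times$ is the Kummer character attached to the extension $F \subseteq L = F(\sqrt[d]{\alpha})$, defined by $\sigma(\sqrt[d]{\alpha}) = \chi_\alpha(\sigma) \cdot \sqrt[d]{\alpha}$. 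This is exactly \eqref{eq:twisting_formula}, and its proof is unchanged.

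Next I would establish the two inclusions that together give $F(E_\text{tors}) = L\, F(E'_\text{tors})$. For the inclusion $F(E_\text{tors}) \subseteq L\, F(E'_\text{tors})$: if $\sigma \in \operatorname{Gal}(\overline{\mathbb{Q}}/L\, F(E'_\text{tors}))$, then $\rho_{E'}(\sigma) = 1$ and $\chi_\alpha(\sigma) = 1$, so the twisting formula yields $\rho_E(\sigma) = 1$, hence $\sigma$ fixes $F(E_\text{tors})$. For the reverse inclusion $L\, F(E'_\text{tors}) \subseteq F(E_\text{tors})$, the key point is that the hypothesis $F(E'_\text{tors}) = F K^\text{ab}$ lets me control $L$: applying \cref{thm:main_theorem} (or directly the inclusion $F K^\text{ab} \subseteq F(E_\text{tors})$ recalled after the statement of \cref{thm:main_theorem}) gives that $F(E_\text{tors})$ contains $F K^\text{ab} = F(E'_\text{tors})$; and since $\rho_E(\tau) = \rho_{E'}(\tau)\chi_\alpha(\tau)$, any $\tau \in \operatorname{Gal}(\overline{\mathbb{Q}}/F(E_\text{tors}))$ has $\rho_E(\tau) = 1$ and $\rho_{E'}(\tau) = 1$ (because $F(E'_\text{tors}) \subseteq F(E_\text{tors})$), so $\chi_\alpha(\tau) = 1$, i.e.\ $\tau$ fixes $L$. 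Thus $\tau$ fixes $L\, F(E'_\text{tors})$, which gives the inclusion. Combining, $F(E_\text{tors}) = L\, F(E'_\text{tors}) = L\, F K^\text{ab} = L K^\text{ab}$, where the last step uses $F \subseteq L$.

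I do not expect a serious obstacle here: every ingredient is already in place, and the argument is essentially a streamlined version of the proof of \cref{cor:index_abelian_twist} in which the condition ``$L \subseteq F K^\text{ab}$'' of that corollary is replaced by the stronger hypothesis ``$F(E'_\text{tors}) = F K^\text{ab}$'', which is precisely what is needed to force $\chi_\alpha$ to be trivial on $\operatorname{Gal}(\overline{\mathbb{Q}}/F(E_\text{tors}))$. The only mild subtlety worth stating explicitly is that $K \subseteq F$, so that $\mathcal{O}^\times \subseteq F^\times$ and the Kummer character $\chi_\alpha$ genuinely takes values in $\mathcal{O}^\times$; this is immediate from $\operatorname{End}_F(E) \cong \mathcal{O}$ via \cite[Chapter~II, Proposition~30]{Shimura_1998}, exactly as at the start of the proof of \cref{cor:index_abelian_twist}.
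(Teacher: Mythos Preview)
Your proof is correct, and the forward inclusion $F(E_\text{tors}) \subseteq L\,F(E'_\text{tors})$ is handled exactly as in the paper. For the reverse inclusion, however, you take a different (and in fact more direct) route than the paper does.

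The paper argues as follows: for $\tau \in \operatorname{Gal}(\overline{\mathbb{Q}}/F(E_\text{tors}))$ one has $\rho_E(\tau)=1$, hence the twisting formula gives $\rho_{E'}(\tau)=\chi_\alpha(\tau)^{-1}\in\mathcal{O}^\times$; then the snake diagram \eqref{eq:snake_diagram}, applied to $E'$, shows that the hypothesis $F(E'_\text{tors})=FK^\text{ab}$ forces $\rho_{E'}(G_F)\cap\mathcal{O}^\times=\{1\}$, whence $\rho_{E'}(\tau)=\chi_\alpha(\tau)=1$. You instead invoke the general inclusion $FK^\text{ab}\subseteq F(E_\text{tors})$ (recalled right after \cref{thm:main_theorem}) to get $F(E'_\text{tors})=FK^\text{ab}\subseteq F(E_\text{tors})$ directly, so $\rho_{E'}(\tau)=1$ is immediate and the twisting formula then yields $\chi_\alpha(\tau)=1$. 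Your argument avoids the appeal to \eqref{eq:snake_diagram} and is slightly more elementary; the paper's argument, on the other hand, makes explicit the structural fact $\rho_{E'}(G_F)\cap\mathcal{O}^\times=\{1\}$, which is of independent interest. Both approaches exploit the same hypothesis $F(E'_\text{tors})=FK^\text{ab}$, just at different points in the logic.
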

\begin{proof}
    If $\sigma \in \operatorname{Gal}(\overline{\mathbb{Q}}/L F(E'_\text{tors}))$, we see from the twisting formula \eqref{eq:twisting_formula} that $\rho_{E'}(\sigma) = \chi_\alpha(\sigma) = \rho_E(\sigma) = 1$, hence $F(E_\text{tors}) \subseteq L F(E'_\text{tors})$.
    Vice versa, if $\tau \in \operatorname{Gal}(\overline{\mathbb{Q}}/F(E_\text{tors}))$ then $\rho_E(\tau) = 1$ and $\rho_{E'}(\tau) = \chi_\alpha(\tau^{-1}) \in \mathcal{O}^\times$.
    However, \eqref{eq:snake_diagram} shows that $\rho_{E'}(G_F) \cap \mathcal{O}^\times = \{1\}$, because $F(E'_\text{tors}) = F K^\text{ab}$ by assumption. Hence $\rho_{E'}(\tau) = \chi_\alpha(\tau) = 1$, which allows us to conclude that $F(E_\text{tors}) = L F(E'_\text{tors}) = L K^\text{ab}$, as we wanted to show.
\end{proof}

\begin{remark}
    Note that the condition $F(E'_\text{tors}) = F K^\text{ab}$ is invariant under base change along a finite extension $F \subseteq F'$.
    In particular,
    if $\operatorname{Pic}(\mathcal{O}) = \{1\}$, one can take as $E'$ any base change to $F$ of an elliptic curve $E_{/K}$ which has complex multiplication by $\mathcal{O}$.
    On the other hand, if $\operatorname{Pic}(\mathcal{O}) \neq \{1\}$, constructing such an elliptic curve is a non-trivial matter, as we will see in the next section.
\end{remark}

\section{Explicit examples}
\label{sec:examples}

We now want to provide some examples of index computations for CM elliptic curves $E$ defined over the corresponding field of moduli $\mathbb{Q}(j(E))$. 
A way of constructing such curves is to consider an elliptic curve $\mathcal{E}$ defined over the function field $\mathbb{Q}(j)$, with $j$-invariant $j(\mathcal{E}) = j$ and discriminant $\Delta_\mathcal{E} \in \mathbb{Q}(j)$, and then specialise the parameter to $j = j_0$ for some CM $j$-invariant $j_0 \in \overline{\mathbb{Q}}$ such that $\Delta_\mathcal{E}(j_0) \neq 0$. When we want to emphasize that the specialization at $j_0$ of the elliptic curve $\mathcal{E}$ has complex multiplication by some order $\mathcal{O}$, we say that $j_0 \in \overline{\mathbb{Q}}$ is \textit{relative to the order} $\mathcal{O}$.
With a view towards doing explicit calculations in the mostly popular computer algebra systems in computational number theory, we consider and compare the following choices of $\mathcal{E}$:
\begin{enumerate}
    \item the curve:
    \[
    \mathcal{E}_\text{SAGE}: y^2 = x^3 + (-3j^2+5184j) x -2j^3+6912j^2-5971968j
    \]
    implemented in SageMath \cite{sagemath} under the command \href{https://doc.sagemath.org/html/en/reference/arithmetic_curves/sage/schemes/elliptic_curves/constructor.html?highlight=ellipticcurve_from_j#sage.schemes.elliptic_curves.constructor.EllipticCurve_from_j}{\textsc{EllipticCurve\_from\_j(j,False)}}. 
    We warn the reader that, without setting the second optional parameter equal to \textsc{False},
    the command \textsc{EllipticCurve\_from\_j}, applied to a rational number $j_0 \in \mathbb{Q}$, returns an elliptic curve $E_{/\mathbb{Q}}$ which has $j$-invariant $j(E) = j_0$, and minimal conductor among all its twists. This curve, in general, can be different from the specialization of $\mathcal{E}_\text{SAGE}$ at $j = j_0$;
    \item the curve:
    \[
        \mathcal{E}_\text{PARI}: y^2= x^3+ (-3j^2 + 5184j)x+ 2j^3 - 6912j^2 + 5971968j
    \]
    implemented in PARI/GP \cite{PARI/GP} under the command \href{https://pari.math.u-bordeaux.fr/dochtml/html-stable/Elliptic_curves.html#ellfromj}{\textsc{ellfromj(j)}};
    \item the curve:
    \begin{equation*}
        \mathcal{E}_\text{MAGMA}: y^2+xy=x^3-\frac{36}{j-1728}x - \frac{1}{j-1728}
    \end{equation*}
    implemented in MAGMA \cite{magma} under the command \href{http://magma.maths.usyd.edu.au/magma/handbook/text/1488#17001}{\textsc{EllipticCurveFromjInvariant(j)}}.
\end{enumerate}
The above families are clearly all defined over $\mathbb{Q}(j)$, and their singular specializations occur only at the values $j_0 \in \{0,1728\}$. Moreover, it is easily verified that  $\mathcal{E}_\text{PARI}$ and $\mathcal{E}_\text{SAGE}$ are isomorphic over $\mathbb{Q}(j, \sqrt{-1})$ while $\mathcal{E}_\text{SAGE}$ and $\mathcal{E}_\text{MAGMA}$ are isomorphic over $\mathbb{Q}\left(j, \sqrt{\frac{1728-j}{3}}\right)$.

Now, for every CM $j$-invariant $j_0 \in \overline{\mathbb{Q}}$ relative to an order of class number $2$, we want to compute the index $\mathcal{I}(E_{j_0}/\mathbb{Q}(j_0))$ where $E_{j_0}$ is the fiber over $j_0$ in any of the three families described above (one can check that all these fibers are non-singular). First of all, we show that for every CM invariant $j_0 \in \overline{\mathbb{Q}}$ the CM fibers $E_{j_0}$ in the above families have the same index $\mathcal{I}(E_{j_0}/\mathbb{Q}(j_0))$. 
% We need the following result.

% \begin{lemma} \label{lem:index_abelian_twist}
% Let $\mathcal{O}$ be an order in an imaginary quadratic field $K$ and let $E_{/H_\mathcal{O}}$ be an elliptic curve with complex multiplication by $\mathcal{O}$. Let $\alpha \in H_\mathcal{O}$ be such that $H_\mathcal{O}(\sqrt[d]{\alpha}) \subseteq K^\text{ab}$, where $d := \lvert \mathcal{O}^\times \rvert$, and denote by $E^{(\alpha)}$ the twist of $E$ by $\sqrt[d]{\alpha}$. Then $\mathcal{I}(E/H_\mathcal{O})=\mathcal{I}(E^{(\alpha)}/H_\mathcal{O})$.
% \end{lemma}
% \begin{proof}
%     Using the twisting formula $\rho_{E^{(\alpha)}} = \rho_E \cdot \chi_{\alpha}$, one easily sees that $H_\mathcal{O}(E^{(\alpha)}_\text{tors}) \subseteq H_\mathcal{O}(E_\text{tors})(\sqrt[d]{\alpha})$.
%     Moreover, the hypothesis $H_\mathcal{O}(\sqrt[d]{\alpha}) \subseteq K^\text{ab} \subseteq H_\mathcal{O}(E_\text{tors}) \cap H_\mathcal{O}(E_\text{tors}^{(\alpha)})$ implies that $H_\mathcal{O}(E_\text{tors})(\sqrt[d]{\alpha}) = H_\mathcal{O}(E_\text{tors})$ and $\chi_\alpha(\sigma) = 1$ for every automorphism $\sigma \in G_{H_\mathcal{O}}$ fixing $H_\mathcal{O}(E_\text{tors}^{(\alpha)})$.
%     This yields the equality $H_\mathcal{O}(E^{(\alpha)}_\text{tors}) = H_\mathcal{O}(E_\text{tors})(\sqrt[d]{\alpha}) = H_\mathcal{O}(E_\text{tors})$.
%     Finally, using \cref{thm:main_theorem}, one gets that $\mathcal{I}(E/H_\mathcal{O})=\mathcal{I}(E^{(\alpha)}/H_\mathcal{O})$, as we wanted to prove.
% \end{proof}

Fix now a CM $j$-invariant $j_0 \in \overline{\mathbb{Q}} \setminus \{0, 1728\}$ relative to an order $\mathcal{O}$. 
Let moreover $(E_{j_0},E_{j_0}',E_{j_0}'')$ be the specialisations of the families $(\mathcal{E}_\text{SAGE},\mathcal{E}_\text{PARI},\mathcal{E}_\text{MAGMA})$ to $j = j_0$. 
If $H_\mathcal{O} = K(j_0)$ denotes the ring class field relative to the order $\mathcal{O}$ then by \cref{lem: base change index} we have $\mathcal{I}(E_{j_0}/\mathbb{Q}(j_0))=\mathcal{I}(E_{j_0}/H_\mathcal{O})$ and similarly with the other two elliptic curves, so we assume that everything is base-changed to the ring class field. Since by the discussion above $E_{j_0}$ and $E'_{j_0}$ are twisted over $H_\mathcal{O}$ by $\alpha=-1$ and $H_{\mathcal{O}}(\sqrt{-1}) \subseteq K^\text{ab}$ (being the compositum of two abelian extensions of $K$), \cref{cor:index_abelian_twist} allows us to conclude that $\mathcal{I}(E_{j_0}/\mathbb{Q}(j_0))=\mathcal{I}(E'_{j_0}/\mathbb{Q}(j_0))$. 
Furthermore, the elliptic curve $\mathcal{E}_\text{MAGMA}$ admits a short Weierstra{\ss} form:
\[
    y^2 = x^{3} - \left(\frac{27 j}{j - 1728}\right) x + \frac{54 j}{j - 1728} 
\]
whose discriminant is given by $\Delta_j := 6^{12} \cdot j^2/(j - 1728)^3$. Thus, we see that: 
\[
    H_\mathcal{O}(\sqrt{j_0 - 1728}) = H_\mathcal{O}(\sqrt{\Delta_{j_0}}) \subseteq H_\mathcal{O}(E_{j_0}''[2])
\]
for every CM $j$-invariant $j_0 \in \overline{\mathbb{Q}}$, relative to the order $\mathcal{O}$.
Since $H_\mathcal{O}(E_{j_0}''[2])$ is generated over $H_\mathcal{O}$ by the Weber functions evaluated at $2$-torsion points, we have that $H_\mathcal{O}(E_{j_0}''[2]) \subseteq K^\text{ab}$ (see \cite[Theorem~4.7]{Campagna_Pengo_2020}). Thus $H_\mathcal{O}\left(\sqrt{(1728 - j_0)/3}\right)$ is abelian over $K$, and \cref{cor:index_abelian_twist} shows that $\mathcal{I}(E_{j_0}/\mathbb{Q}(j_0)) = \mathcal{I}(E_{j_0}''/\mathbb{Q}(j_0))$.
Hence, we can conclude that the three families $\mathcal{E}_\text{PARI}, \mathcal{E}_\text{SAGE}$ and $\mathcal{E}_\text{MAGMA}$, when specialised to the same CM $j$-invariant, have the same CM index.
We will use in the rest of the paper, the elliptic curves $E_{j_0}$ obtained by specialising the family $\mathcal{E}_\text{SAGE}$. 
Note that, once the imaginary quadratic order $\mathcal{O}$ is fixed, the index $\mathcal{I}(E_{j_0}/\mathbb{Q}(j_0))$ does not depend on the particular $j$-invariant $j_0 \in \overline{\mathbb{Q}}$ relative to $\mathcal{O}$ to which one specializes the family $\mathcal{E}_\text{SAGE}$, because all these $j$-invariants are conjugate under the action of the absolute Galois group $\operatorname{Gal}(\overline{\mathbb{Q}}/\mathbb{Q})$ (see \cite[Proposition~13.2]{Cox_2013}). 

\begin{algorithm}[b]
    %\centering
    \textbf{Input}: \textsc{Delta} $= \Delta_\mathcal{O}$, the discriminant of $\mathcal{O}$.
    \begin{lstlisting}
from sage.libs.pari.convert_sage import gen_to_sage
R.<x> = PolynomialRing(QQ)
K.<D> = NumberField(x^2-Delta)
F.<j> = K.extension(hilbert_class_polynomial(Delta))
E = EllipticCurve_from_j(j,F)
Fabs.<a> = NumberField(gen_to_sage(pari(F.absolute_polynomial()).polredbest(),{'x' : x}))
Eabs = E.base_extend(F.embeddings(Fabs)[0])
F3.<f3> = Eabs.division_field(3)
F3best.<f3best> = NumberField(gen_to_sage(pari(F3.absolute_polynomial()).polredbest(),{'x' : x}))
F3rel.<f3rel> = F3best.relativize(K.embeddings(F3best)[0])
if F3rel.is_galois_relative() == True:
    Index = gp.rnfisabelian(pari('y^2 + '+str(-Delta)).nfinit(),pari(F3rel.relative_polynomial())) + 1
else:
    Index = 1
\end{lstlisting}
\textbf{Output:} \textsc{Index} $= \mathcal{I}(E_{j_0}/\mathbb{Q}(j_0))$, for any CM $j$-invariant $j_0$ relative to the order $\mathcal{O}$
    \caption{\textsc{SageMath} code to compute the index $\mathcal{I}(E_{j_0}/\mathbb{Q}(j_0))$, relative to the elliptic curve $E_{j_0}$ obtained by specialising the family $\mathcal{E}_\text{SAGE}$ to a CM $j$-invariant $j_0$.}
    \label{alg:sage}
\end{algorithm}

Let us turn now to the computation of the index $\mathcal{I}(E_{j_0}/\mathbb{Q}(j_0))$, where $j_0 \in \overline{\mathbb{Q}}$ is a CM $j$-invariant relative to an order of class number $2$.
The procedure described in \cref{sec:algorithm} simplifies considerably in this case.
Indeed, in general, for any imaginary quadratic order $\mathcal{O} \neq \mathbb{Z}\left[ \frac{1 + \sqrt{-3}}{2} \right]$ and any elliptic curve $E$ with complex multiplication by $\mathcal{O}$ and defined over the ring class field $H_\mathcal{O}$, one has that:
\begin{equation} \label{eq:index_3tors}
    \mathcal{I}(E/H_\mathcal{O}) = \frac{2}{[H_\mathcal{O}(E[3]) \colon H_\mathcal{O}(E[3]) \cap K^\text{ab}]}
\end{equation}
as one can see by combining \cref{prop:finite_computation} and
\cref{prop:3_division_field}.
Moreover, since:
\[
    [H_\mathcal{O}(E[3]) \colon H_\mathcal{O}(E[3]) \cap K^\text{ab}] = \begin{cases}
        1, \ \text{if the extension} \ K \subseteq H_\mathcal{O}(E[3]) \ \text{is abelian;} \\
        2, \ \text{otherwise (as follows from \eqref{eq:index_3tors}, since} \ \mathcal{I}(E/H_\mathcal{O}) \in \mathbb{N}\text{),}
    \end{cases}
\]
we see, using \cref{lem: base change index}, that the computation of $\mathcal{I}(E_{j_0}/\mathbb{Q}(j_0))$ reduces to understanding whether or not the $3$-division field of $E_{j_0}$ is an abelian extension of $K$.
We implemented this computation in \textsc{SageMath} (importing also the functions \href{https://pari.math.u-bordeaux.fr/dochtml/html-stable/General_number_fields.html#polredbest}{\textsc{polredbest}} and \href{https://pari.math.u-bordeaux.fr/dochtml/html-stable/General_number_fields.html#rnfisabelian}{\textsc{rnfisabelian}} from \textsc{Pari/Gp}), as shown in \cref{alg:sage}.
We ran this algorithm for all the $j$-invariants relative to orders $\mathcal{O}$ of class number $2$, whose discriminants $\Delta_\mathcal{O}$ are given by the following list:
\[
    \begin{aligned}
        \Delta_\mathcal{O} \in \{ &-15, -20, -24, -32, -35, -36, -40, -48, -51, -52, -60, -64, -72, -75, -88, -91, \\ 
        &-99, -100, -112, -115, -123, -147, -148, -187, -232, -235, -267, -403, -427 \}
    \end{aligned}
\]
which can be obtained either by applying the algorithms described in \cite{Watkins_2004}, and implemented in SageMath under the function \href{https://doc.sagemath.org/html/en/reference/arithmetic_curves/sage/schemes/elliptic_curves/cm.html#sage.schemes.elliptic_curves.cm.discriminants_with_bounded_class_number}{\textsc{sage.schemes.elliptic\_curves.cm.discriminants\_with\_bounded\_class\_number}}, or by appealing to the classical result \cite[Theorem~1]{Stark_1975}, and then applying the class number formula \cite[Theorem~7.24]{Cox_2013}.
The results of this computation show that $\mathcal{I}(E_{j_0}/\mathbb{Q}(j_0)) = 1$ unless $\Delta_\mathcal{O} = -15$, in which case $\mathcal{I}(E_{j_0}/\mathbb{Q}(j_0)) = 2$.

To conclude, consider the order $\mathcal{O} = \mathbb{Z}[\sqrt{-5}]$ of discriminant $\Delta_\mathcal{O} = -20$, such that $\mathcal{I}(E_{j_0}/\mathbb{Q}(j_0)) = 1$ for every CM $j$-invariant $j_0 \in \overline{\mathbb{Q}}$ relative to $\mathcal{O}$.
We now construct, by a suitable twist of $E := E_{j_0}$ over the Hilbert class field $H := H_\mathcal{O}$, another elliptic curve $E'_{/H}$ with complex multiplication by $\mathcal{O}$, with the property that $\mathcal{I}(E'/H) = 2$.
To do so, we specialize $j_0 = 282880 \sqrt{5} + 632000$, so that $E := E_{j_0}$ is given by:
\begin{equation} \label{eq:Weierstrass_example}
    E \colon y^2 = x^{3} + 29736960 (36023 \sqrt{5} - 80550) x - 55826186240 (16154216 \sqrt{5} + 36121925)
\end{equation}
and we follow the procedure described in the proof of \cite[Theorem~5.11]{Campagna_Pengo_2020}.

More precisely, observe that $H = \mathbb{Q}(\sqrt{-5},i)$ and $3 \cdot \mathcal{O} = \mathfrak{p}_3 \cdot \overline{\mathfrak{p}}_3$ with $\mathfrak{p}_3 = (3,\sqrt{-5} + 1)$ and $\overline{\mathfrak{p}}_3 = (3,\sqrt{-5} - 1 )$. 
By \cite[Theorem~4.6]{Campagna_Pengo_2020} one has that $H_{\mathfrak{p}_3} = H_{\overline{\mathfrak{p}}_3} = H$, where $H_{\mathfrak{p}_3}$ and $H_{\overline{\mathfrak{p}}_3}$ denote respectively the ray class fields of $K$ modulo $\mathfrak{p}_3$ and $\overline{\mathfrak{p}}_3$.
This in particular implies, using \cite[Chapter~II, Theorem~5.6]{si94}, that the $x$-coordinates of the points $P \in E[\mathfrak{p}_3] \cup E[\overline{\mathfrak{p}}_3]$ lie in $H$.
Moreover, it follows from \cite[Lemma~2.4]{Bourdon_Clark_2020} that $\lvert E[\mathfrak{p}_3] \rvert = \lvert E[\overline{\mathfrak{p}}_3] \rvert = 3$, which shows that each non-trivial $\mathfrak{p}_3$-torsion point has the same $x$-coordinate, and similarly for non-trivial $\overline{\mathfrak{p}}_3$-torsion points.
From the factorization: 
\[
    \begin{aligned}
            \phi_{E,3}(x) = 3 \cdot &(x + 594880 + 59840 i - 26048 \sqrt{-5} + 266816 \sqrt{5}) \cdot \\ &(x + 594880 - 59840 i + 26048 \sqrt{-5} + 266816 \sqrt{5} ) \cdot \\ 
            &(x^{2} - (1189760 + 533632 \sqrt{5}) x - 2668089262080 - 1193205432320 \sqrt{5})
    \end{aligned}
\]
of the $3$-division polynomial $\phi_{E,3} \in H[x]$, one can verify that $x_3 := - 594880 - 59840 i + 26048 \sqrt{-5} - 266816 \sqrt{5}$ is the $x$-coordinate of all the non-trivial $\mathfrak{p}_3$-torsion points.
Hence $H(E[\mathfrak{p}_3]) = H(\sqrt{\alpha})$, where:
\[
    \alpha := 13956546560 \cdot (1190435 + 2307955 i - 1032149 \sqrt{-5} + 532379 \sqrt{5})
\]
is obtained by substituting $x_3$ in the right hand side of \eqref{eq:Weierstrass_example}.
It can be checked that the extension $K \subseteq H(\sqrt{\alpha})$ is not Galois, and in particular not abelian, which is compatible with the fact that $\mathcal{I}(E/\mathbb{Q}(j_0)) = 1$.

Thus, the twisted elliptic curve $E' := E^{(\alpha)}$, given by the global minimal Weierstra{\ss} model:
\begin{equation} \label{eq:Weierstrass_Shimura}
    E' \colon y^2 - \left( \frac{1 - i + \sqrt{-5} + \sqrt{5}}{2}\right) x y - \left( \frac{1 + i + \sqrt{-5} + \sqrt{5}}{2} \right) y = x^{3} + x^{2} + \left(2 i - \sqrt{5} \right) x - 1 + 2 i
\end{equation}
has index $\mathcal{I}(E'/H) = 2$, as follows from \eqref{eq:index_3tors}.
Indeed, the first point of \cite[Proposition~5.1]{Campagna_Pengo_2020} implies that $H(E'[\mathfrak{p}_3]) = H_{\mathfrak{p}_3}$, which entails that $H(E'[3])$ coincides with the $3$-ray class field of $K$, as can also be checked by direct computation.
Note finally that $H(E'_\text{tors}) = K^\text{ab}$, as follows from \cref{cor:index_non_0_1728}.

\begin{remark}
The interested reader can find at \cite{Campagna_Pengo_Code} a \textsc{SageMath} notebook in which we implemented the computations carried out to find the elliptic curve $E'$ appearing in \eqref{eq:Weierstrass_Shimura}.
\end{remark}

\section*{Acknowledgements}
We would like to thank François Brunault, Ian Kiming, Fabien Pazuki and Peter Stevenhagen for many useful discussions.
We also thank the anonymous referees for their helpful comments and suggestions.

The first author is supported by ANR-20-CE40-0003 Jinvariant. Moreover, he wishes to thank the Max Planck Institute for Mathematics in Bonn for
its financial support, great work conditions and an inspiring atmosphere. 

The second author performed this work within the framework of the LABEX MILYON (ANR-10-LABX-0070) of Universit{\'e} de Lyon, within the program ``Investissements d'Avenir'' (ANR-11-IDEX-0007) operated by the French National Research Agency (ANR).

\printbibliography

\end{document}